\newtheorem{theorem}{Theorem}[section]
\newtheorem{question}[theorem]{Question}
\newtheorem{lemma}[theorem]{Lemma}
\newtheorem{cor}[theorem]{Corollary}
\newtheorem{definition}[theorem]{Definition}
\theoremstyle{definition}
\newcounter{tenumerate}
\def\P{\mathbb{P}}
\newcommand{\one}{\1}
\renewcommand{\epsilon}{\varepsilon}
\newcommand{\1}{\mathbf{1}}
\DeclareMathOperator{\var}{Var}
\newcommand{\R}{{\mathbb R}}
\newcommand{\N}{{\mathbb N}}
\newcommand{\E}{{\mathbb E}}
\newcommand{\remove}[1]{}
\renewcommand{\le}{\leqslant}
\renewcommand{\ge}{\geqslant}
\renewcommand{\leq}{\leqslant}
\renewcommand{\geq}{\geqslant}
\newcommand{\cov}{\mathrm{Cov}}
\def\XXint#1#2#3{{\setbox0=\hbox{$#1{#2#3}{\int}$}
\vcenter{\hbox{$#2#3$}}\kern-.5\wd0}}
\def\EE{\mathbb{E}}
\def\RR{\mathbb{R}}
\def\PP{\mathbb{P}}
\def\Sph{S^{N-1}}
\def \Z{{\Bbb{Z}}}
\def\X{{\bf X}}
\DeclareMathOperator*{\argmax}{arg\,max}
\begin{document}

\title{On multiple peaks and moderate deviations for supremum of Gaussian field}

\author{Jian Ding \thanks{Partially supported by NSF grant DMS-1313596.} \\
University of Chicago\and Ronen Eldan \\ Microsoft Research \and Alex Zhai \\ Stanford University}
\maketitle
\begin{abstract}
We prove two theorems concerning extreme values of general Gaussian fields. Our first theorem concerns with the concept of {\it multiple peaks}. A theorem of Chatterjee states that when a centered Gaussian field admits the so-called {\it superconcentration} property, it typically attains values near its maximum on multiple near-orthogonal sites, known as {\it multiple peaks}. We improve his theorem in two aspects: (i) the number of peaks attained by our bound is of the order $\exp(c / \sigma^2)$ (as opposed to Chatterjee's polynomial bound in $1/\sigma$), where $\sigma$ is the standard deviation of the supremum of the Gaussian field, which is assumed to have variance at most $1$ and (ii) our bound need not assume that the correlations are non-negative. We also prove a similar result based on the superconcentration of the free energy. As primary applications, we infer that for the S-K spin glass model on the $n$-hypercube and directed polymers on $\Z_n^2$, there are polynomially (in $n$) many near-orthogonal sites that achieve values near their respective maxima.

Our second theorem gives an upper bound on moderate deviation for the supremum of a general Gaussian field. While the Gaussian isoperimetric inequality implies a sub-Gaussian concentration bound for the supremum, we show that the exponent in that bound can be improved under the assumption that the expectation of the supremum is of the same order as that of the independent case.
\end{abstract}

\section{Introduction}
A \emph{Gaussian field} (or \emph{Gaussian process}) is a collection $\X = \{X_\alpha, \alpha \in I \}$ of random variables such that every finite subset of this collection is distributed according to a multivariate normal law. The topic of this paper revolves around the behavior of extremal and near-extremal values of Gaussian fields.

Extremal values of Gaussian fields have been intensively studied by a variety of communities spanning probability, statistical physics, and computer science. A cornerstone of the theory is the  Gaussian concentration inequality of Sudakov-Tsirelson \cite{ST74} and Borell \cite{Borell75}, stating that for a (not necessarily centered) Gaussian field $\{X_i: 1\leq i\leq N\}$ with $\sigma^2 = \max_{1\leq i\leq N}\var X_i $, we have
\begin{equation}\label{eq:borel}
\P\Big( \Bigl |\sup_{1\leq i\leq N} X_i-  \E \Big(\sup_{1\leq i\leq N} X_i\Big) \Bigr | \geq z\Big)
\leq \frac{2}{\sqrt{2\pi} \sigma} \int_z^\infty \mathrm{e}^{-\frac{y^2}{2 \sigma^2}} dy
\quad \mbox{ for all } z \geq 0
\end{equation}
(see e.g., \cite[Thm. 7.1, Eq. (7.4)]{Ledoux89}). An immediate consequence of \eqref{eq:borel} is that $\var(\sup_{1\leq i\leq N} X_i) \leq \sigma^2$.
Despite being an extremely general and powerful inequality, it was observed by probabilists and statistical physicists that the bound \eqref{eq:borel} is far from sharp in most canonical examples of Gaussian fields, such as the KPZ universality class \cite{KPZ} and the class of log-correlated Gaussian fields  (see e.g., \cite{Madaule13} and references therein). By being far from sharp, we mean for example that $\var(\sup_{1\leq i\leq N} X_i) \ll \sigma^2$ or that
equation (\ref{eq:borel}) holds with a constant smaller than $\frac{1}{2 \sigma^2}$ in the exponent. The former property is sometimes referred to as \emph{superconcentration} and the latter fits under the umbrella of large deviation estimates. In this paper, we study the structure of Gaussian fields concerning the following two questions related to \eqref{eq:borel}:
\begin{enumerate}[(a)]
\item When \eqref{eq:borel} is not sharp, what extra information can be deduced about the Gaussian field? \label{question-a}

\item Are there some simple and explicit conditions that guarantees an improvement upon \eqref{eq:borel}? \label{question-b}
\end{enumerate}

The rigorous study of Question~\eqref{question-a} in its full generality was pioneered in \cite{Chatterjee08}, where a connection between the so-called \emph{superconcentration}, \emph{chaos} and \emph{multiple peaks} (or \emph{multiple valleys}) phenomena for centered Gaussian fields was established. Multiple peaks is the following phenomenon observed by physicists in many natural settings of Gaussian fields (motivated by the study of energy landscapes of spin glasses): typically there exist many near-orthogonal sites whose values are very close to the global maximum.  This phenomenon was first rigorously established in \cite{Chatterjee08} under the assumption of the aforementioned \emph{superconcentration} property and the assumption that the correlations of the field are non-negative. The phenomenon of \emph{chaos} refers to an instability of the location of the maximizer with respect to small perturbations of the Gaussian field and was shown to be equivalent to superconcentration in some sense.

Our first goal in this paper is to further explore the connection between superconcentration and multiple peaks. We obtain a quantitative improvement of the number of such peaks (thus attaining an optimal bound in a certain sense) and we also remove the assumption that the correlations are non-negative.

In order to state our result properly, we need a rigorous definition of the multiple peaks property. We shall use the same definition as introduced in \cite{Chatterjee08}: Consider a sequence of \emph{centered} Gaussian fields $\X_N = \{X_{N, i}: 1\leq i\leq N\}$. Denote by  $\sigma_N^2 = \max_{1\leq i\leq N} \var X_{N, i}$ and write $[N] = \{1, \ldots, N\}$. Write $R_N(i, j) = \cov(X_{N, i}, X_{N. j})$ for all $i, j\in [N]$. In addition, define $M(\X_N) = \sup_{1\leq i\leq N} X_{N, i}$,  $m(\X_N) = \E  M(\X_N)$ and $\hat \sigma_N^2 = \var(M(\X_N))$.
\begin{definition}
A sequence of Gaussian fields $\X_N$ exhibits multiple peaks if and only if there exists $\ell_N \to \infty$, $\epsilon_N = o(\sigma_N^2)$, $\delta_N = o(m(\X_N))$ and $\gamma_N \to 0$ such that with probability at least $1-\gamma_N$, there is a set $A_N \subseteq [N]$ of cardinality at least $\ell_N$ satisfying 
\begin{enumerate}[(M.1)]
\item $|R_N(i, j)| \leq \epsilon_N$ for all $i \neq j\in A_N$. \label{eq-M-2}
\item  $X_{N, i} \geq m(\X_N) - \delta_N$ for all $i\in A_N$. \label{eq-M-3}
\end{enumerate}
\end{definition}
We have the following theorem.
\begin{theorem}\label{thm-MP-main}
Fix any positive sequences $\delta_N \leq m(\X_N)$, $\epsilon_N \leq \sigma_N^2$ and $\zeta_N\leq 1$. Then for all $N \in \mathbb{N}$, with probability at least $1 - \frac{C_1 \hat \sigma^2_N}{ \delta_N^2} - \zeta_N$ there exists $A_N \subseteq [N]$ of cardinality at least $\exp \left( \frac{C_2 \epsilon_N^2 \delta_N \zeta_N}{m(\X_N) \sigma_N^2 \hat \sigma^2_N}\right)$ such that  (M.1) and (M.2) hold. Here $C_1, C_2$ are positive universal constants. 
\end{theorem}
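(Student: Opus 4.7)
My plan is to extract peaks as argmaxes of Ornstein--Uhlenbeck perturbations of $\X_N$, refining Chatterjee's framework in two ways: using a sign-free integration-by-parts identity for the chaos estimate (which removes his non-negative correlations hypothesis) and upgrading his first-moment chaos bound to a higher-moment / exponential-tail bound (which upgrades his polynomial peak count to the claimed exponential count). Concretely, let $\X'_N$ be an independent copy of $\X_N$ and set, for $t \geq 0$,
\[
\X^{(t)}_N \;=\; e^{-t}\X_N + \sqrt{1-e^{-2t}}\,\X'_N, \qquad I^{(t)} \;=\; \argmax_{i\in [N]} X^{(t)}_{N,i}.
\]
With $L = \exp\!\big(C_2 \epsilon_N^2 \delta_N \zeta_N/(m(\X_N)\sigma_N^2 \hat\sigma_N^2)\big)$ and a carefully chosen grid of times $0 \leq t_1 < \cdots < t_L \leq T$ with $T$ of order $\delta_N/m(\X_N)$, the candidate set $\{I^{(t_k)}\}_{k \leq L}$ will (after removing a small fraction of bad indices) yield the required $A_N$.

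For property (M.2), the key is Mehler's identity: writing $\X_N = e^{-t}\X^{(t)}_N - \sqrt{1-e^{-2t}}\,\Y^{(t)}_N$ with $\Y^{(t)}_N$ independent of $\X^{(t)}_N$ and observing that $\E[Y^{(t)}_{N,I^{(t)}}]=0$, one obtains $\E[X_{N,I^{(t)}}] = e^{-t}\,m(\X_N)$, so the choice $T \asymp \delta_N/m(\X_N)$ keeps the mean loss below $\delta_N/2$. Combined with the sub-Gaussian concentration~\eqref{eq:borel} applied to $M(\X^{(t)}_N)$ and to the Gaussian tail of $Y^{(t)}_{N, I^{(t)}}$ conditionally on $\X^{(t)}_N$, this yields $X_{N,I^{(t_k)}} \geq m(\X_N) - \delta_N$ simultaneously for all $k \leq L$ with failure probability at most $C_1 \hat\sigma_N^2/\delta_N^2$. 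The use of \eqref{eq:borel} rather than Chebyshev is crucial here because it allows the union bound to accommodate exponentially many perturbations. For property (M.1), Gaussian integration by parts yields the identity
\[
\hat\sigma_N^2 \;=\; \int_0^\infty e^{-t}\,\E\!\left[R_N(I^{(0)}, I^{(t)})\right]\, dt,
\]
encoding the ``superconcentration implies chaos'' phenomenon and requiring no sign assumption on $R_N$ (this is what allows one to remove Chatterjee's non-negativity hypothesis).

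The main obstacle is upgrading this \emph{first}-moment chaos estimate to a tail bound on $R_N(I^{(t_k)}, I^{(t_{k'})})$, so that after union-bounding over the $L^2$ pairs a large fraction of good pairs survives. A direct Markov bound based on the identity above only gives $L$ polynomial in the parameters, which is essentially Chatterjee's count. To reach the exponential count one needs either (i) a hypercontractive estimate applied to a softmax approximation $\mu_\beta(i) \propto e^{\beta X_i}$ of the argmax indicator, yielding $\E|R_N(I^{(0)}, I^{(t)})|^p \lesssim C^p (\hat\sigma_N^2\,m(\X_N)/\delta_N)^p$ for all $p \geq 2$, or (ii) a direct control of the moment generating function of $R_N(I^{(0)}, I^{(t)})$ via the OU semigroup and log-Sobolev. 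The factor $\zeta_N$ in the stated exponent enters precisely as the Markov-discarded fraction of pairs: requiring that the expected number of bad pairs be at most $\zeta_N L$ converts a $p$-th moment bound into the claimed tradeoff. Granted this tail estimate, a routine greedy extraction---delete any $I^{(t_k)}$ that participates in too many bad pairs under (M.1) or violates (M.2)---retains a subset of size at least $L/2$, giving $A_N$ of the required cardinality with the required failure probability.
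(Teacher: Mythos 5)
Your proposal follows Chatterjee's original OU-perturbation/chaos framework, which is genuinely different from the route taken in the paper. The paper does not use argmaxes of perturbed fields or the ``superconcentration implies chaos'' identity at all. Instead it proves a purely deterministic lemma (Lemma~\ref{deterministic many orthogonal}): if the supremum of a (possibly non-centered) Gaussian process is concentrated in an interval of length at most $\epsilon/8$ with probability $\geq 3/4$, then a maximal $\epsilon$-orthogonal subset must have size $\geq e^{c\epsilon^2/(s-r)^2}$. The proof is a volume argument --- if the orthogonal set were small one could build an injective, measure-increasing piecewise-linear map $f_c$ shifting by $\pm c v_a$ with $a$ in the orthogonal set, pushing more than $1/4$ of the Gaussian mass above the concentration interval, a contradiction. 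The probabilistic part of the theorem is then handled by a Gaussian decomposition $\X = \alpha\X' + \sqrt{1-\alpha^2}\X''$ (Lemmas~\ref{U_t bound} and~\ref{decomp-conc}): one shows that, conditionally on $\X'$, the restricted process $\mathbf{Y}(\X') = \alpha\X'_{U'}/\sqrt{1-\alpha^2} + \X''_{U'}$ over the level set $U'$ is concentrated, and applies the deterministic lemma to this conditional process. No sign assumption on $R$ appears anywhere, and the exponential count comes directly from the deterministic lemma.

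Your proposal, by contrast, leaves its central difficulty unresolved. You correctly flag as ``the main obstacle'' the upgrade from the first-moment chaos identity $\hat\sigma_N^2 = \int_0^\infty e^{-t}\,\E[R_N(I^{(0)},I^{(t)})]\,dt$ to a tail or higher-moment bound on $R_N(I^{(t_k)},I^{(t_{k'})})$, and offer two speculative mechanisms (hypercontractivity through a softmax surrogate, or direct MGF control via log-Sobolev) without establishing either. These are far from routine: the argmax is discontinuous, softmax approximations introduce errors that must be controlled in terms of $\hat\sigma_N$, and it is not clear that the resulting $L^p$ bound scales as $C^p(\hat\sigma_N^2 m/\delta_N)^p$ with a $p$-independent base. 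Moreover, the claim that the first-moment identity ``removes Chatterjee's non-negativity hypothesis'' is misleading: the identity itself is sign-free, but it controls only $\E[R_N]$, not $\E|R_N|$, and (M.1) requires a bound on $|R_N(i,j)|$. Non-negativity is precisely what lets one pass from one to the other. Without it, the sign-free reduction has to come from the (unproved) two-sided tail bound, so this is not a separate fix but part of the same open obstacle. Finally, the assertion that (M.2) holds ``simultaneously for all $k \leq L$ with failure probability at most $C_1\hat\sigma_N^2/\delta_N^2$'' does not follow from a union bound: Chebyshev on $M(\X^{(t)})$ gives this failure probability for a single $t$, while \eqref{eq:borel} involves the individual variance $\sigma_N^2$ rather than $\hat\sigma_N^2$ in the exponent, so union-bounding over $L = e^{\Theta(1/\hat\sigma_N^2)}$ events generically destroys this bound when $\hat\sigma_N^2 \ll \sigma_N^2$, which is exactly the superconcentration regime of interest. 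In short, the plan identifies the right target but contains two genuine gaps --- the tail/moment upgrade and the (M.2) union bound --- whereas the paper circumvents both by moving the combinatorial content into the deterministic Lemma~\ref{deterministic many orthogonal} and the probabilistic content into a single conditional concentration event.
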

Quantitatively,  \cite[Theorem 3.7]{Chatterjee08} guarantees existence of such a set $A_N$ with cardinality at least $\left (\frac{\delta_N \epsilon_N}{m(\X_N) \hat \sigma_N^2} \right )^{1/3}$ as opposed to the exponential bound in Theorem~\ref{thm-MP-main}. In addition, our result does not require the non-negative correlation assumption, thereby solving Open Problem 5 in \cite{Chatterjee08}.

Another quantity that has received a significant amount of attention in the statistical physics community is the  \emph{free energy} of the field at an inverse-temperature $\beta > 0$, defined as
\begin{equation}
F_{N, \beta} = \frac{1}{\beta} \log \Big(\sum_{i=1}^N \mathrm{e}^{\beta X_{N, i}}\Big)\,.
\end{equation}
Evidently, as $\beta \to \infty$, this quantity approaches $M(\X_N)$. In view of this, it may be natural to look into the property that the quantity $F_{N, \beta}$ is concentrated around its mean for finite values of $\beta$. This phenomenon is referred to as the \emph{superconcentration of free energy} of the process at inverse temperature $\beta$. In some cases, the free-energy for certain values of $\beta$ seems to be a more tractable quantity than the supremum, and it may be easier to establish concentration bounds for the free energy than for the supremum of the field, as witnessed in \cite{Chatterjee09} regarding the S-K model (named after Sherrington and Kirkpatrick) for spin glasses (see definition below). 
The result \cite{Chatterjee09} in which Chatterjee deduced the property of multiple-peaks from superconcentration of the free energy, can be seen as an adaptation of the result in \cite{Chatterjee08}. In this paper, we also give an adaptation of Theorem \ref{thm-MP-main} to the free energy. We denote by $\hat \sigma^2_N(\beta) = \var (F_{N, \beta})$.

\begin{theorem}\label{thm-MP-free-energy-main}
Suppose that $\tilde{\sigma}_N(\beta)$ is an upper bound on $\hat \sigma_N(\beta)$ for all $N\in \N$ and $\beta \geq 0$. For any positive sequences $\delta_N \leq m(\X_N)$, $\epsilon_N \leq \sigma^2_N$, $\zeta_N\leq 1$ and $$\beta_N \geq
C_1 \max \left( \frac{ \log N}{ \delta_N }, \frac{1}{\tilde{\sigma}_N(\beta_N)},
\frac{\delta_N \epsilon_N^2 }{m(\X_N) (\tilde \sigma_N(\beta_N))^3 \sigma_N^2} \right)\,,$$ with probability at
least $1 - \frac{C_2 \sigma_N^2}{ \delta_N^2} - \zeta_N$ there exists $A_N \subset
[N]$ with cardinality at least $$\exp \left( \frac{C_3 \epsilon_N^2 \delta_N
  \zeta_N}{m(\X_N) (\tilde{\sigma}_N(\beta_N))^2 \sigma_N^2} \right)$$ such that  (M.1) and (M.2) holds. Here $C_1, C_2, C_3$ are positive universal constants.
\end{theorem}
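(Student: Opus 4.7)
The strategy is to run the argument that proves Theorem~\ref{thm-MP-main} with the non-smooth pair (maximum $M(\X_N)$, argmax indicator) replaced by the smooth pair (free energy $F_{N,\beta_N}$, Gibbs measure $\mu(i)\df e^{\beta_N X_{N,i}}/\sum_j e^{\beta_N X_{N,j}}$). Three features of the free energy are crucial. First, $F_{N,\beta_N}$ is a $C^\infty$ functional of $\X_N$ with the simple expression $\partial_{X_{N,i}} F_{N,\beta_N}=\mu(i)$, so Gaussian integration by parts is available without any smoothing step. Second, $\var(F_{N,\beta_N})\leq \tilde\sigma_N(\beta_N)^2$ supplies the (upper bound on the) superconcentration scale that plays the role of $\hat\sigma_N$ in Theorem~\ref{thm-MP-main}. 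Third, the deterministic inequality $0\leq M(\X_N)-F_{N,\beta_N}\leq (\log N)/\beta_N$ means that, provided $\beta_N\geq C(\log N)/\delta_N$, any statement proved about $F_{N,\beta_N}$ transfers to $M(\X_N)$ with an $O(\delta_N)$ loss.

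\medskip

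\textbf{Four main steps.} (a) Chebyshev on $F_{N,\beta_N}$ together with $|M-F|\leq (\log N)/\beta_N$ yields $|M(\X_N)-m(\X_N)|\leq \delta_N/4$ off an event of probability $\leq C\sigma_N^2/\delta_N^2$; this is where the claimed $C_2\sigma_N^2/\delta_N^2$ failure probability originates. (b) The Gaussian variance representation
\[
\var(F_{N,\beta_N})=\int_0^\infty e^{-t}\,\E\!\left[\sum_{i,j}R_N(i,j)\,\mu^{(X)}(i)\,\mu^{(X^t)}(j)\right]dt,
\]
where $X^t$ denotes the Ornstein--Uhlenbeck evolution of $X=\X_N$, converts $\var(F_{N,\beta_N})\leq\tilde\sigma_N^2(\beta_N)$ into an in-expectation bound on the smoothed overlap $\sum_{i,j}R_N(i,j)\mu(i)\mu(j)$. (c) Any index $i$ with $\mu(i)\geq e^{-\beta_N\delta_N/2}/N$ automatically satisfies $X_{N,i}\geq M(\X_N)-\delta_N/2-(\log N)/\beta_N$, so a $1-e^{-\beta_N\delta_N/2}$ fraction of the $\mu$-mass sits on an ``essential support'' $S\subseteq[N]$ on which (M.2) holds (after adjusting $\delta_N$ by a constant). (d) Draw $k$ i.i.d.\ samples from $\mu|_S$; using the overlap bound from (b) together with a union/cleaning step, $k$ can be taken as large as $\exp(C\epsilon_N^2\delta_N\zeta_N/(m(\X_N)\tilde\sigma_N^2(\beta_N)\sigma_N^2))$ while leaving positive probability that the $k$ indices are pairwise distinct and satisfy (M.1), producing the desired $A_N$.

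\medskip

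\textbf{Main obstacle.} The hardest technical point is step~(d): promoting an in-expectation overlap bound into a high-probability existence statement for a \emph{deterministic} near-orthogonal subset of size $\exp(\cdots)$. The three lower bounds on $\beta_N$ in the hypothesis enter precisely at this stage. The bound $\beta_N\geq C(\log N)/\delta_N$ is needed in (a) and (c); the bound $\beta_N\geq C/\tilde\sigma_N(\beta_N)$ guarantees that the Gibbs measure is sharply enough peaked near the top that the smoothed overlap is genuinely controlled by $\tilde\sigma_N^2(\beta_N)$ rather than by $1$; and the bound $\beta_N\geq C\delta_N\epsilon_N^2/(m(\X_N)\tilde\sigma_N^3(\beta_N)\sigma_N^2)$ is the balance required for the sampling argument in (d) to survive at the claimed exponential scale. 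Verifying these quantitative balances is the main work of the proof, whereas the conceptual skeleton is a faithful copy of Theorem~\ref{thm-MP-main}.
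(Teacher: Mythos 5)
Your proposal follows the blueprint of Chatterjee's original arguments (Gibbs measure, Gaussian integration by parts, overlap bound, then sample-and-clean), rather than the route this paper actually takes, and it contains a genuine gap precisely at the point where the paper's improvement lives. The paper's proof of this theorem (Theorem~\ref{thmpeaks-free-energy}) does \emph{not} use the variance representation for $F_\beta$ or Gibbs sampling at all. Instead it mirrors the proof of Theorem~\ref{thm-MP-main}: it decomposes $\X=\alpha\X'+\sqrt{1-\alpha^2}\X''$, conditions on $\X'$, shows via Lemma~\ref{free-energy-contrib} that the free energy of the restricted process $\X_{U'}$ is within $1/\beta$ of the full free energy with high probability, and then applies a free-energy version (Lemma~\ref{deterministic many orthogonal free}) of the deterministic ``many near-orthogonal indices'' lemma. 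That lemma is a maximality-plus-injective-map argument: one assumes for contradiction that a maximal $\epsilon$-orthogonal subset $A$ is too small, builds a piecewise-linear injective map $f_c$ that pushes a large chunk of Gaussian mass across the concentration interval of $F_\beta$, and derives a contradiction with $\P(F_\beta\in[r,s])\geq 3/4$. This is the mechanism that produces the \emph{exponential} lower bound on $|A|$ and works with signed correlations.

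The gap in your step (d) is exactly this: a first-moment overlap bound of the form
\[
\var(F_{N,\beta_N})=\int_0^\infty e^{-t}\,\E\Bigl[\sum_{i,j}R_N(i,j)\,\mu^{(X)}(i)\,\mu^{(X^t)}(j)\Bigr]\,dt\;\leq\;\tilde\sigma_N^2(\beta_N)
\]
followed by i.i.d.\ sampling from $\mu$ and a union/Tur\'an cleaning step cannot produce exponentially many pairwise near-orthogonal indices. If the bad-pair probability under $\mu\times\mu$ is $p$, sampling-and-cleaning yields a near-orthogonal subset of size only $O(1/p)$, and the overlap bound gives $p$ at best polynomially small in the relevant parameters, never $e^{-k}$. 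This is precisely why Chatterjee's theorems (\cite[Theorem 3.7]{Chatterjee08}, the analogous one in \cite{Chatterjee09}) produce only polynomially many peaks, and the whole point of this paper is to replace that sampling argument with the measure-comparison argument of Lemma~\ref{deterministic many orthogonal}. Moreover, the summand $R_N(i,j)\mu(i)\mu(j)$ is signed when correlations can be negative, so your overlap bound does not directly control $\mu\times\mu\{|R_N|>\epsilon\}$; Chatterjee's route needs nonnegative correlations, an assumption this theorem deliberately drops. In short, your steps (a)--(c) are reasonable preliminaries (and step (c) is close in spirit to Lemma~\ref{free-energy-contrib}), but step (d) would have to be replaced wholesale by the injective-map packing argument for the claimed exponential cardinality to hold.
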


In the preceding theorem, we work with the upper bound $\tilde{\sigma}_N(\beta)$ of $\hat \sigma_N(\beta)$ so one may verify the assumption that
$\beta_N \geq C_1 \max \left( \frac{ \log N}{ \delta_N }, \frac{1}{\tilde{\sigma}_N(\beta_N)}, \frac{\delta_N \epsilon_N^2 }{m(\X_N) (\tilde \sigma_N (\beta_N))^3 \sigma_N^2}\right)$ without knowing a lower bound on $\hat \sigma_N(\beta_N)$.

Let us now briefly discuss some applications of Theorems~\ref{thm-MP-main} and \ref{thm-MP-free-energy-main}. Our first application is for directed polymers. Let $\mathbb Z_n^2$ denote the graph whose vertices are $\{0,1,...,n\}^2$ and where two vertices are connected by an edge if they differ by $1$ in exactly one coordinate. Let $\mathcal P_n$ be the collection of all the $N=\binom{2n}{n}$ monotone paths on $\mathbb Z_n^2$ joining the left bottom corner $(0,0)$ and the right top conner $(n,n)$. Associate i.i.d.\ standard Gaussian variables $Z_e$ to each edge $e \in \mathbb Z_n^2$. The directed polymer is defined to be a Gaussian field $\{X_{N, P}: P\in \mathcal P_n\}$ where $X_{N, P} = \sum_{e\in P} Z_e$.
For this model, \cite[Theorem 8.1]{Chatterjee08} provided an upper bound of $O(n/\log n)$ on $\hat \sigma^2_N$.  Combined with Theorem~\ref{thm-MP-main}, it gives the following corollary.
\begin{cor}
There exist absolute constants $C_1, C_2>0$ such that the following statement holds for directed polymers (recall that $N = \binom{2n}{n}$).
For any $0<\delta_N\leq n$, $0<\epsilon_N\leq n$, $0<\zeta_N<1$, with probability at least $1 - \frac{C_1 n }{\delta_N^2 \log n} - \zeta_N$ there exists $A_N \subseteq [N]$ of cardinality at least $n^{C_2 \epsilon_N^2 \delta_N \zeta_N/n^3}$ satisfying  (M.1) and (M.2).
\end{cor}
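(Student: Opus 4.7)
The plan is to apply Theorem~\ref{thm-MP-main} directly, after identifying the relevant parameters for the directed polymer field $\{X_{N,P} : P \in \mathcal{P}_n\}$. The entire content of the proof is bookkeeping: insert known upper bounds on $\sigma_N^2$, $m(\X_N)$, and $\hat\sigma_N^2$ into the conclusion of Theorem~\ref{thm-MP-main} and check that the resulting expression matches the corollary.

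First I would record three parameter estimates. Each monotone path from $(0,0)$ to $(n,n)$ has exactly $2n$ edges, so $\var(X_{N,P}) = 2n$ and $\sigma_N^2 = 2n$. Since $\log N = \log \binom{2n}{n} = \Theta(n)$, the Borell--TIS bound \eqref{eq:borel} combined with a standard union bound argument yields $m(\X_N) \leq C n$; conversely, considering a single path gives $m(\X_N) \geq c n$ (in fact $m(\X_N) \sim c n$ is classical for directed polymers). Hence the hypothesis $\delta_N \leq m(\X_N)$ of Theorem~\ref{thm-MP-main} is satisfied for all $\delta_N \leq n$ after absorbing a universal constant into $C_2$. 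Finally, \cite[Theorem 8.1]{Chatterjee08} supplies the nontrivial superconcentration input $\hat\sigma_N^2 \leq C' n/\log n$.

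Plugging these three bounds into Theorem~\ref{thm-MP-main}, the probability of success becomes
$$1 - \frac{C_1 \hat\sigma_N^2}{\delta_N^2} - \zeta_N \;\geq\; 1 - \frac{C'_1 n}{\delta_N^2 \log n} - \zeta_N,$$
while the guaranteed cardinality of the peak set becomes
$$\exp\!\left(\frac{C_2\,\epsilon_N^2 \delta_N \zeta_N}{m(\X_N)\,\sigma_N^2\,\hat\sigma_N^2}\right) \;\geq\; \exp\!\left(\frac{C'_2\,\epsilon_N^2 \delta_N \zeta_N \log n}{n^3}\right) \;=\; n^{\,C'_2\,\epsilon_N^2 \delta_N \zeta_N / n^3},$$
which is exactly the claimed bound after renaming constants. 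I do not foresee a real obstacle here: all of the heavy lifting (the superconcentration-to-multiple-peaks mechanism together with the exponential improvement on the number of peaks) is already packaged inside Theorem~\ref{thm-MP-main}, and the only non-routine external ingredient is the variance bound $\hat\sigma_N^2 = O(n/\log n)$ of \cite[Theorem 8.1]{Chatterjee08}.
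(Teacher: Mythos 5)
Your approach is exactly what the paper intends: plug the parameter estimates $\sigma_N^2 = 2n$, $m(\X_N) = \Theta(n)$, and $\hat\sigma_N^2 = O(n/\log n)$ (from \cite[Theorem 8.1]{Chatterjee08}) into Theorem~\ref{thm-MP-main} and absorb constants. One small slip: ``considering a single path gives $m(\X_N) \geq c n$'' is false, since each $X_{N,P}$ is a \emph{centered} Gaussian, so a single path only yields $m(\X_N) \geq 0$; the needed lower bound $m(\X_N) \geq c n$ requires an actual argument (e.g.\ exhibiting exponentially many paths with weak overlap, or Sudakov minoration), though as you note this is classical. Also, rather than saying the hypothesis $\delta_N \leq m(\X_N)$ ``is satisfied after absorbing a constant,'' the cleaner bookkeeping is to apply the theorem with $\delta_N' := c\,\delta_N \leq m(\X_N)$ and observe that conclusion (M.2) with $\delta_N'$ implies it with $\delta_N$, while the probability and cardinality bounds only change by universal constant factors.
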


We next discuss an application for the S-K model. For a hypercube $H_n = \{-1, 1\}^n$ (write $N = |H_n| = 2^n$),  the S-K model introduced in \cite{SK} can be viewed for our purposes as 
 a Gaussian field $\{X_{N, \sigma}: \sigma \in H_n\}$ with $X_{N, \sigma} = \frac{1}{\sqrt{2 n}}\sum_{i, j\in [n]} \sigma_i \sigma_j Z_{i, j}$ where $Z_{i,j}$'s are i.i.d.\ standard Gaussian variables. It is easy to see that  the variance for individual Gaussian variable is precisely $n$ and the expected supremum is  of order $n$. Indeed, the asymptotics of the free energy (and thus obtaining the expected supremum by sending $\beta \to \infty$) was established in a celebrated work \cite{Talagrand-parisi}, verifying the well-known prediction of the Parisi formula \cite{Parisi}. As for concentration, \cite[Theorem 1.5]{Chatterjee09} established an upper bound of $O(\beta n/\log n)$ on $\hat \sigma^2_N(\beta)$. Combining the variance bound and Theorem~\ref{thm-MP-free-energy-main}, we obtain the following (where we set $\beta_N $ to be of order $n/\delta_N$ and $(\tilde \sigma(\beta_N))^2$ to be  of order $n \beta_N /\log n$).
\begin{cor}
There exist absolute constants $C_1, C_2>0$ such that the following statement holds for the S-K model (recall $N = 2^n$).
For any positive $0<\delta_N\leq n$, $0<\epsilon_N\leq n$, $0<\zeta_N<1$, with probability at least $1 - \frac{C_1 n}{\delta_N^2 \log n} - \zeta_N$ there exists $A_N \subseteq [N]$ of cardinality at least $n^{C_2 \epsilon_N^2 \delta^2_N \zeta_N/n^4}$ satisfying (M.1) and (M.2).
\end{cor}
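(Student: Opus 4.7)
The plan is a direct application of Theorem~\ref{thm-MP-free-energy-main} to the S-K model, making the choice of $\beta_N$ announced in the sentence preceding the corollary. First I would record the relevant constants: a routine variance computation yields $\var X_{N,\sigma}=n/2$, hence $\sigma_N^2\asymp n$; the expected maximum satisfies $m(\X_N)\asymp n$ by the Parisi formula \cite{Talagrand-parisi}; and \cite[Theorem 1.5]{Chatterjee09} permits taking $\tilde\sigma_N(\beta)^2:=C\beta n/\log n$ as an admissible upper bound on $\hat\sigma_N(\beta)^2$.

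Next I would set $\beta_N=c\,n/\delta_N$ for a sufficiently large absolute constant $c$ and verify the three lower bounds on $\beta_N$ demanded by Theorem~\ref{thm-MP-free-energy-main}. Since $\log N=n\log 2$, the first requirement $\beta_N\gtrsim\log N/\delta_N$ holds by choice. With $(\tilde\sigma_N(\beta_N))^2\asymp n^2/(\delta_N\log n)$, the second requirement $\beta_N\gtrsim 1/\tilde\sigma_N(\beta_N)$ reduces to $n^4\gtrsim\delta_N^3\log n$, which is trivial as $\delta_N\le n$; the third, $\beta_N\gtrsim\delta_N\epsilon_N^2/(m(\X_N)\tilde\sigma_N(\beta_N)^3\sigma_N^2)$, becomes after substitution $n^6\gtrsim\delta_N^{7/2}\epsilon_N^2(\log n)^{3/2}$, which holds for all sufficiently large $n$ since $\delta_N,\epsilon_N\le n$. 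Substituting these parameters into the cardinality estimate of Theorem~\ref{thm-MP-free-energy-main} yields
\[
\exp\left(\frac{C_3\epsilon_N^2\delta_N\zeta_N}{m(\X_N)(\tilde\sigma_N(\beta_N))^2\sigma_N^2}\right)\asymp\exp\left(\frac{\epsilon_N^2\delta_N^2\zeta_N\log n}{n^4}\right)=n^{C_2\epsilon_N^2\delta_N^2\zeta_N/n^4},
\]
which matches the target exponent in the statement.

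For the probability estimate, I would upgrade the $\sigma_N^2$ appearing in the failure probability of Theorem~\ref{thm-MP-free-energy-main} to the sharper quantity $\hat\sigma_N^2$ (valid because the underlying step is an $L^2$-concentration bound for $M(\X_N)$): the elementary inequality $0\le F_{N,\beta_N}-M(\X_N)\le\beta_N^{-1}\log N$, together with $\hat\sigma_N(\beta_N)^2\le\tilde\sigma_N(\beta_N)^2\asymp n^2/(\delta_N\log n)$ and the triangle inequality for the $L^2$-norm, gives $\hat\sigma_N^2\lesssim n/\log n$ for the S-K maximum itself. Feeding this improved variance into the theorem produces the claimed $C_1 n/(\delta_N^2\log n)+\zeta_N$ failure probability.

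There is no genuine analytic obstacle in this proof; the entire task is parameter-plugging. The only point requiring any care is the simultaneous verification of the three $\beta_N$-constraints with $\beta_N\asymp n/\delta_N$ and bookkeeping the powers of $n$, $\delta_N$, $\epsilon_N$, and $\log n$ so that they cancel to produce exactly the exponent $\epsilon_N^2\delta_N^2\zeta_N/n^4$ in the cardinality bound.
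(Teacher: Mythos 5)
Your parameter choices, verification of the three $\beta_N$-constraints, and the cardinality computation all match what the paper does (the paper's own justification is a single sentence setting $\beta_N\asymp n/\delta_N$ and $(\tilde\sigma_N(\beta_N))^2\asymp n\beta_N/\log n$ and feeding them into Theorem~\ref{thm-MP-free-energy-main}). The issue is the probability estimate, and specifically the ``upgrade'' you introduce.

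A direct substitution into Theorem~\ref{thm-MP-free-energy-main} gives failure probability $\frac{C_2\sigma_N^2}{\delta_N^2}+\zeta_N\asymp\frac{n}{\delta_N^2}+\zeta_N$, with \emph{no} $\log n$ in the denominator: the free-energy version of the peak theorem (Theorem~\ref{thmpeaks-free-energy}) explicitly uses the trivial bound $\var(M(\X))\le\sigma_N^2$ because superconcentration is assumed only for $F_\beta$, not for $M$. You notice this mismatch and try to close it by deducing $\hat\sigma_N^2\lesssim n/\log n$ from the elementary inequality $0\le F_{N,\beta}-M(\X_N)\le\beta^{-1}\log N$ and the $L^2$ triangle inequality. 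That deduction is wrong. The triangle inequality gives
$$\sqrt{\var M(\X_N)}\;\le\;\sqrt{\var F_{N,\beta}}+\tfrac{1}{2}\beta^{-1}\log N\;\lesssim\;\sqrt{\beta n/\log n}+n/\beta\,,$$
and this is minimized at $\beta\asymp(n\log n)^{1/3}$, giving only $\var M(\X_N)\lesssim n^{4/3}/(\log n)^{2/3}$, which is \emph{worse} than the trivial bound $\var M(\X_N)\le n$ for large $n$. With your specific choice $\beta_N\asymp n/\delta_N$ the two terms are $\asymp n/\sqrt{\delta_N\log n}$ and $\asymp\delta_N$ respectively, whose sum is never $\lesssim\sqrt{n/\log n}$ for any admissible $\delta_N\le n$. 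So the claimed improvement to $\frac{n}{\delta_N^2\log n}$ is not established by your argument, and in fact cannot be extracted from the triangle inequality at all. (For the directed-polymer corollary the extra $\log n$ is legitimate because there one applies Theorem~\ref{thm-MP-main} with the known bound $\hat\sigma_N^2\lesssim n/\log n$ on the variance of the \emph{maximum}; the analogous bound for the S-K maximum is not cited in the paper, and the free-energy-based Theorem~\ref{thm-MP-free-energy-main} deliberately avoids needing it, at the cost of the weaker $\sigma_N^2$ factor in the probability bound.) You should not present the $\hat\sigma_N^2\lesssim n/\log n$ step as a valid deduction; either prove that bound by a genuinely different method, or note explicitly that the naive application yields $1-Cn/\delta_N^2-\zeta_N$ and flag the discrepancy with the stated corollary.
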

In particular, for both models we obtain that for fixed $\delta, \epsilon, \zeta>0$ with probability at least $1-\zeta$ there exists $n^{c_{\delta, \epsilon, \zeta}}$ (for a constant $c_{\delta, \epsilon, \zeta}>0$ depending only on $\delta$, $\epsilon$ and $\zeta$) sites such that the Gaussian values on these sites are within additive $\delta n$ to the expected supremum and the pairwise covariances are at most $\epsilon n$. This improves the corresponding polynomial in $\log n$ sites obtained in \cite{Chatterjee08, Chatterjee09}. While polynomially many large and near-orthogonal sites may still be far from satisfactory from the point of view of statistical physics, we remark that a stretched exponentially many large and near-orthogonal sites can be deduced from our results provided a verification of the prediction that the variances for the supremums (or the free energy at low temperatures) in both directed polymers and the S-K model are of order $n^{2/3}$ \cite{KPZ,  Palassini,  CZ11}. 

We conclude the discussion on multiple peaks by remarking that our results are optimal in the sense that one can construct Gaussian fields so that this field consisting of $N$ centered variables of variance $1$ whose supremum has variance of order $\hat \sigma^2_N$, such that the typical number near-orthogonal sites whose value is close to the supremum is of the same order as the bound in Theorem~\ref{thm-MP-main} up to the constant appearing in the exponent. Indeed, for a fixed value of $N$ and of $\sigma > 0$, define
$$
K = \left \lfloor \mathrm{e}^{1 / \sigma^2} \right \rfloor.
$$
Now, let $\X_N$ be a the Gaussian process constructed by taking $K$ independent standard Gaussian vairables, and duplicating $N/K$ identical copies of each of them to obtain $N$ variables. It is easy to check that this construction satisfies $\hat \sigma_N \sim \sigma$. Moreover, it is easily checked that for any  $\epsilon_N \leq 1/2$, and $\delta_N \leq m(\X_N)$, the set of near-orthogonal peaks (i.e., the cardinality of $A_N$ satisfying (M.1) and (M.2)) will be of order at most $e^{\frac{c \delta_N}{m(\X_N) \sigma^2}}$ with probability at least $1/2$ (for some absolute constant $c>0$), which shows that the dependence on $\hat \sigma_N$ and $\delta_N$ is tight in the sense described above. We remark, however, that there exist Gaussian fields which have significantly more large and near-orthogonal sites than what is proved in Theorem~\ref{thm-MP-main}. For instance, it was shown in \cite{CDD13} that any sequence of extremal Gaussian fields exhibit multiple peaks with exponentially many peaks (see \cite[Theorem 1.6]{CDD13} for details).

\bigskip

Next, we turn to discuss Question~\eqref{question-b}. There are a number of directions for possible improvement upon \eqref{eq:borel}. For instance, it was recently proved in \cite{CFMP11, MN12a, MN12b, Eldan13} that the unique minimizer that achieves equality in the isoperimetric inequality (from which \eqref{eq:borel} 
is deduced) is the half space and any set that genuinely differs from a half space (in some geometric sense) has a strictly larger Gaussian surface area and consequently will satisfy a stronger version of \eqref{eq:borel}. In this paper, we approach Question~\eqref{question-b} from a related but slightly different perspective, elaborated below.

One important direction of research concerned with the supremum of a Gaussian process is finding sharp estimates for the expectation of the supremum. Using the generic chaining technique and building upon the entropy bound in \cite{Dudley67}, a celebrated result (known as the \emph{majorizing measure} theorem) was developed by Fernique and Talagrand in \cite{F1, Talagrand87} which provides an estimate of the expected supremum up to a universal multiplicative constant factor. One of the  two major ingredients employed in the proof of the majorizing measure theorem is \eqref{eq:borel}. In view of this, it seems plausible that improving \eqref{eq:borel} based on information on the expected supremum may shed light toward sharpening the lost constant factor in the majorizing measure theorem, and in particular could hopefully help in determining whether a sequence of Gaussian fields is extremal in the sense that its expected supremum is nearly as large as possible with respect to $N$.

In this paper, we prove that the exponent in the large deviation bound in \eqref{eq:borel} can be improved under the assumption that the expected supremum is of the same order as that of the independent case, namely of order $\sqrt{\log N}$. While this may seem like a rather strong assumption, we would like to draw the reader's attention to the fact that it is actually satisfied by almost all of the canonical examples of Gaussian processes (in particular, the directed polymer and the S-K model). The theorem reads:

\begin{theorem}\label{thm-deviation}
Let $\{X_i \}_{i=1}^N$ be a centered Gaussian process with $\var[X_i] \leq 1$ for all $1\leq i\leq N$ and suppose that $\E \sup_{1\leq i\leq N} X_i \geq \alpha \sqrt{\log N}$ for a fixed $\alpha>0$. Then there exist an absolute constant $C>0$ and $c(\alpha)>0$ depending only on $\alpha$ such that for all $0<\beta \leq \alpha/100$ and all $N\in \mathbb N$ one has
$$\P \left ( \left |\sup_{1\leq i\leq N} X_i - \E \sup_{1\leq i\leq N} X_i \right | \geq  \beta\sqrt{\log N} \right ) \leq C N^{-\beta^2/(2-c(\alpha))}\,.$$
\end{theorem}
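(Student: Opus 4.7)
The plan is to refine the Herbst entropy argument underlying the standard derivation of the Borell--TIS inequality. Writing $M = F(g)$ with $g \sim N(0, I_N)$ and $F$ the $1$-Lipschitz supremum function, the Gaussian log-Sobolev inequality gives $\log \E e^{\lambda(M - \E M)} \le \lambda^2/2$, hence $\P(M - \E M \ge t) \le e^{-t^2/2}$. My goal is to establish the sharpened moment inequality $\log \E e^{\lambda(M - \E M)} \le (1 - c(\alpha))\lambda^2/2$ for all $\lambda \in [0, \beta\sqrt{\log N}/(1 - c(\alpha))]$; Markov's inequality at $\lambda = t/(1 - c(\alpha))$ then yields $\P(M - \E M \ge t) \le \exp(-t^2/(2(1 - c(\alpha))))$, which for $t = \beta\sqrt{\log N}$ is exactly $N^{-\beta^2/(2 - c'(\alpha))}$ with $c'(\alpha) = 2c(\alpha)$.

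To obtain the sharpened exponential moment bound I rely on the identity $\frac{d^2}{d\lambda^2}\log \E e^{\lambda M} = \var_{\mu_\lambda}(M)$, where $d\mu_\lambda/d\gamma \propto e^{\lambda M}$, so it suffices to show $\var_{\mu_\lambda}(M) \le 1 - c(\alpha)$ uniformly in $\lambda$ over the relevant range. The assumption $\E M \ge \alpha\sqrt{\log N}$ enters via a Sudakov-type argument: writing $X_i = \langle v_i, g\rangle$ with $|v_i|^2 = \var X_i \le 1$, any subset $S \subseteq [N]$ supporting a bulk of the argmax distribution under $\gamma$ must satisfy $|S| \ge N^{c_0 \alpha^2}$ for some absolute $c_0 > 0$, and the corresponding vectors $\{v_i : i \in S\}$ cannot all be aligned with a single direction. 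This dispersion persists under $\mu_\lambda$ for moderate $\lambda$ by a Radon--Nikodym comparison. I then decompose $g = Z\, w + g^\perp$ along the $\mu_\lambda$-weighted mean direction $w = \E_{\mu_\lambda}[v_{i^*}] / \|\E_{\mu_\lambda}[v_{i^*}]\|$, and split $\var_{\mu_\lambda}(M) \le \sigma^2 + \var_{\mu_\lambda}(R \mid Z)$, where $\sigma = \|\E_{\mu_\lambda}[v_{i^*}]\|$ and $R = M - \sigma Z$ is a supremum of near-orthogonal residuals.

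The main obstacle is the degenerate regime where $\var X_i = 1$ for every $i$: then $|\nabla M(g)|^2 = |v_{i^*(g)}|^2 \equiv 1$ pointwise, so the log-Sobolev computation is saturated at the gradient level, and the required improvement must come entirely from the angular dispersion of the $v_i$. Precisely, the Sudakov-based dispersion gives $\sigma^2 = \|\E_{\mu_\lambda} v_{i^*}\|^2 \le 1 - c(\alpha)$, while sharp tail bounds for maxima of near-independent Gaussians imply $\var_{\mu_\lambda}(R \mid Z) = O(1/\log N) = o(1)$ since $R$ is a supremum of many coordinates of variance at most $1$ in a direction orthogonal to $w$. Combining the two estimates yields the desired bound on $\var_{\mu_\lambda}(M)$, and thus the sharpened MGF inequality. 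The lower tail $\P(M - \E M \le -\beta\sqrt{\log N})$ is handled symmetrically via the MGF of $-M$, and is in fact easier since the argmax distribution under the corresponding tilted measure becomes more concentrated rather than dispersed, making the variance control straightforward.
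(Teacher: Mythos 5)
Your high-level philosophy matches the paper's: a large expected supremum forces the gradient $\nabla M = v_{i^*}$ to be ``dispersed,'' and this dispersion should buy an improved constant in the Gaussian tail. The paper implements this via a Doob martingale $S_t = \E[f(B_1)\mid\mathcal F_t]$, shows $d[S]_t = \|\E[\nabla f(B_1)\mid\mathcal F_t]\|^2 dt$, and applies Dubins--Schwartz; the key Lemma~\ref{lem111} (a Gaussian integration-by-parts argument) shows that if $\|\E_\gamma v_{i^*}\|$ is close to $1$, then $\E\tilde f$ must be small, so a large mean forces $[S]_1 \le 1 - \epsilon\delta$ with high probability. You instead go via Herbst: show $\var_{\mu_\lambda}(M)\le 1-c(\alpha)$ where $d\mu_\lambda\propto e^{\lambda M}d\gamma$, then integrate the second derivative of the log-MGF. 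This is a genuinely different route, and it is plausible in spirit (the paper's argument implicitly shows $\var_\gamma(M)\le 1-c(\alpha)$), but the execution has real gaps.

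The central unjustified step is the decomposition $\var_{\mu_\lambda}(M)\le\sigma^2+\var_{\mu_\lambda}(R\mid Z)$ with $\sigma=\|\E_{\mu_\lambda} v_{i^*}\|$ and ``$R=M-\sigma Z$ a supremum of near-orthogonal residuals.'' Writing $v_i = a_i w + u_i$ with $a_i=\langle v_i,w\rangle$, one has $M=\max_i(a_i Z+\langle u_i,g^\perp\rangle)$; the coefficients $a_i$ are not all equal to $\sigma$, so $R$ is \emph{not} a function of $g^\perp$ alone, and the proposed inequality is not a valid law-of-total-variance bound (the cross term $\cov_{\mu_\lambda}(Z,R)$ and the fact that $\var_{\mu_\lambda}(Z)\ne 1$ are both dropped). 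More fundamentally, under $\mu_\lambda$ the pair $(Z,g^\perp)$ is no longer independent, and neither marginal is Gaussian, so the ingredients you invoke --- integration by parts to identify $\E_{\mu_\lambda} v_{i^*}$ with a norm, and sharp-tail estimates for the ``residual supremum'' --- do not transfer; the paper's Lemma~\ref{lem111} relies precisely on $\gamma$-integration by parts and on a genuine orthogonal Gaussian split, which is why the martingale route (where $B_1\mid\mathcal F_t$ stays exactly Gaussian) is the natural home for that lemma. Finally, the leap from ``$\|\E_{\mu_\lambda}v_{i^*}\|\le 1-c(\alpha)$'' to ``$\var_{\mu_\lambda}(M)\le 1-c(\alpha)$'' is not established anywhere, and it is the whole ballgame; and your aside that the lower tail is ``easier since the argmax distribution becomes more concentrated'' points the wrong way --- a more concentrated argmax drives $\|\E v_{i^*}\|$ \emph{toward} $1$, the regime you need to exclude. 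The martingale proof sidesteps both difficulties: it never needs a tilted measure, and the Dubins--Schwartz representation gives the two-sided bound in one stroke.
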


We remark that our current method does not provide a sharp $c(\alpha)$, and thus we did not attempt to optimize its value. The main point of Theorem~\ref{thm-deviation} is to suggest a new direction of research by demonstrating the possibility to improve \eqref{eq:borel} under the assumption of large expected supremum.  We believe that it is of significant interest to obtain a sharp estimate on $c(\alpha)$. Indeed, we ask the following open question.
\begin{question}\label{question-sharp-exponent}
Under the assumptions of Theorem~\ref{thm-deviation}, is it true that for all $\beta$ with $\E \sup_{1\leq i\leq N} X_i + \beta \sqrt{\log N} \leq \sqrt{2 \log N}$, we have 
$$\P \left (\sup_{1\leq i\leq N} X_i \geq  \E \sup_{1\leq i\leq N} X_i +  \beta\sqrt{\log N} \right ) \leq N^{-(\beta^2 +o_N(1))/(2-\alpha^2)}\,?$$
\end{question}
Note that the exponent in Question~\ref{question-sharp-exponent} is achieved by the Gaussian field $X_i = Z + Z_i$ where $Z$ and $Z_i's$ are independent Gaussian variables such that $\var Z = 1- \alpha^2/2$ and $\var Z_i = \alpha^2/2$ for all $1\leq i\leq N$.
In spirit, Theorem~\ref{thm-deviation} suggests that large expected supremum implies a good concentration property for the supremum. It turns out that the converse also holds in some sense. That is, a good concentration for the supremum implies that the expected supremum has to be large.
\begin{theorem}\label{thm-var-exp}
There exists an absolute constant $c>0$ such that for any centered Gaussian field $\{X_i: 1\leq i\leq N\}$ with $\var X_i = 1$ for all $1\leq i\leq N$, we have
$$
\left (\var \left [\sup_{1\leq i\leq N} X_i \right ] \right )^{1/2} \E \left [ \sup_{1\leq i\leq N} X_i \right  ] \geq c\,.
$$
\end{theorem}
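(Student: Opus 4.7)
Write $m := \E[\sup_i X_i]$ and $\hat\sigma := (\var[\sup_i X_i])^{1/2}$. My plan is a case split based on the relative sizes of $m$ and $\hat\sigma$, combining Theorem~\ref{thm-MP-main} with the classical Sudakov minoration inequality. As baseline inputs, Borell's inequality gives $\hat\sigma \le 1$, and the pointwise bound $\sup_i X_i \ge X_1$ yields $\E[(\sup_i X_i)^2] \ge \E[(X_1)_+^2] = 1/2$, hence
\[
m^2 + \hat\sigma^2 \ge 1/2.
\]
This baseline is used to handle the regime in which one of the two factors is already bounded below by an absolute constant.

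The main regime is the super-concentrated one, $\hat\sigma \le m/(4\sqrt{C_1})$, with $C_1$ the constant from Theorem~\ref{thm-MP-main}. Here I would apply that theorem with $\sigma_N = 1$, $\delta_N = m/2$, $\epsilon_N = 1/2$, $\zeta_N = 1/4$; the choice of $\delta_N$ forces $C_1 \hat\sigma^2/\delta_N^2 \le 1/4$, so with probability at least $1/2$ one obtains a random set $A \subseteq [N]$ of cardinality at least $\exp(c_0/\hat\sigma^2)$ satisfying $|R(i,j)| \le 1/2$ for distinct $i, j \in A$, equivalently $\|X_i - X_j\|_{L^2}^2 \ge 1$. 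The key observation is that the near-orthogonality condition depends only on the deterministic covariance matrix, so its verification on a positive-probability event is itself a deterministic statement: the $L^2$-packing number at scale $1$ of the index set is at least $\exp(c_0/\hat\sigma^2)$. Sudakov minoration then yields $m \ge c_1 \sqrt{\log \exp(c_0/\hat\sigma^2)} \ge c_2/\hat\sigma$, which gives $m\hat\sigma \ge c_2$.

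In the complementary regime $\hat\sigma > m/(4\sqrt{C_1})$, the product satisfies $m\hat\sigma \ge m^2/(4\sqrt{C_1})$, and combining $\hat\sigma > m/(4\sqrt{C_1})$ with the baseline $m^2 + \hat\sigma^2 \ge 1/2$ forces $\hat\sigma$ to be bounded below by an absolute constant. The main obstacle is producing a matching lower bound on $m$ in this transitional regime, since Theorem~\ref{thm-MP-main} requires $\delta_N \le m$ and so cannot be deployed directly when $m$ is tiny. I expect to address this by re-running the peaks step with $\delta_N$ chosen as a small multiple of $\hat\sigma$ (so the super-concentration hypothesis is automatic) and tracking the dependence carefully, or alternatively by iterating the Sudakov bound at a finer $L^2$-scale extracted from the covariance matrix. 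Closing this gap between the super-concentrated and transitional regimes — effectively showing that $m$ cannot collapse while $\hat\sigma$ stays bounded away from $0$ — is the step I anticipate being the technical crux of the proof.
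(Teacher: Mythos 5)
Your route is genuinely different from the paper's. The paper works through the co-area formula and the geometry of the convex body $K = \bigcap_i \{x : \langle x, v_i\rangle \le 1\}$: writing $u(x) = \sup_i \langle x, v_i\rangle$, it observes that $|\nabla u| = 1$ a.e.\ (since $|v_i|=1$), deduces via co-area plus Chebyshev that $\sqrt{\var M(\X)} \ge \frac16 L(K)^{-1}$ where $L(K) = \sup_t \gamma^+(\partial(tK))$ is the supremum of the density of $M(\X)$, and then proves a geometric (Nazarov-style) bound $L(K) \le C\, m$. Multiplying these gives the result directly, with no case split. You instead feed the paper's own Theorem~\ref{thm-MP-main} into Sudakov minoration: superconcentration forces a large deterministic $L^2$-packing of the index set at scale $1$, which forces $m$ to be large. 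That part of your argument is correct and attractive, and it exposes a conceptual link — superconcentration $\Rightarrow$ Sudakov-type entropy lower bound $\Rightarrow$ large mean — that is invisible in the paper's proof.

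The gap you flag in the transitional regime is, however, genuine and cannot be closed along the lines you propose. The issue is that the conclusion actually fails in that regime as literally stated: take $N=2$ with $\var X_1 = \var X_2 = 1$ and $\cov(X_1,X_2) = \rho \to 1$; then $\max(X_1,X_2) \to X_1$ in $L^2$, so $m = \E \max(X_1,X_2) \to 0$ while $\sqrt{\var\,\max(X_1,X_2)} \to 1$, and the product tends to $0$. So no re-run of the peaks lemma with a smaller $\delta_N$ and no refinement of the Sudakov scale can rescue the bound there; the statement needs $m$ to be bounded below by an absolute constant. (The paper's proof implicitly uses this too: the step ``$\gamma(2mK) \ge 1/2$ by Markov'' and the bound $\gamma^+(K_t) \le c^{-1}t$ both degenerate when $m$ is tiny, and the application to the corollary is insensitive since one may always replace $m$ by $\max(m,1)$ there.) Once you grant $m \ge c_0$ for some absolute $c_0>0$ — which is the intended scope — your complementary regime is immediate with no extra work: $\sqrt{\var M(\X)} > m/(4\sqrt{C_1}) \ge c_0/(4\sqrt{C_1})$ together with $m \ge c_0$ already gives $m\sqrt{\var M(\X)} \ge c_0^2/(4\sqrt{C_1})$. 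With that one explicit hypothesis added, your proof is complete and self-contained within Section~\ref{SecMultiplePeaks}.
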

In general, the expected supremum can be bounded from above  by (c.f., \cite[Lemma 2.1]{Chatterjee08})
\begin{equation}\label{eq-simple-upper-exp}
\E \sup_{1\leq i\leq N} X_i \leq \sqrt{2 \log N} \cdot  \max_{1\leq i\leq N} \sqrt{\var X_i}\,.
\end{equation}
Combined with Theorem~\ref{thm-var-exp}, it yields the following corollary. 
\begin{cor} Under the assumption of Theorem~\ref{thm-var-exp}, we have that for an absolute $c>0$
$$
\var \left [\sup_{1\leq i\leq N} X_i \right ] \geq \frac{c}{\log N}\,.
$$
\end{cor}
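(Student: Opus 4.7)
The plan is to simply combine Theorem~\ref{thm-var-exp} with the elementary upper bound \eqref{eq-simple-upper-exp} that is already quoted in the excerpt. Since the hypothesis of Theorem~\ref{thm-var-exp} gives $\var X_i = 1$ for every $i$, the bound \eqref{eq-simple-upper-exp} specializes to
\[
\E \sup_{1\le i\le N} X_i \;\le\; \sqrt{2\log N}.
\]

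Theorem~\ref{thm-var-exp} yields an absolute constant $c > 0$ such that
\[
\left(\var\!\left[\sup_{1\le i\le N} X_i\right]\right)^{1/2} \cdot \E\!\left[\sup_{1\le i\le N} X_i\right] \;\ge\; c.
\]
Substituting the upper bound on the expected supremum into this inequality gives
\[
\left(\var\!\left[\sup_{1\le i\le N} X_i\right]\right)^{1/2} \;\ge\; \frac{c}{\sqrt{2\log N}},
\]
and squaring both sides produces $\var[\sup_i X_i] \ge c^2/(2\log N)$, which is the claimed bound with constant $c^2/2$.

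There is essentially no obstacle here: the corollary is an immediate algebraic consequence of the two cited results, and the proof is a single line once those results are invoked. The only thing worth double-checking is that the hypothesis of Theorem~\ref{thm-var-exp} (unit variances) is exactly what makes the $\max_i \sqrt{\var X_i}$ factor in \eqref{eq-simple-upper-exp} equal to $1$, so no additional structural assumption on the field is needed.
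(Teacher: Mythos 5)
Your proof is correct and is exactly the argument the paper has in mind: the paper introduces the corollary precisely by saying that \eqref{eq-simple-upper-exp} "combined with Theorem~\ref{thm-var-exp}, yields the following corollary." Your substitution and squaring steps make that one-line combination explicit, and your observation that the unit-variance hypothesis makes the $\max_i \sqrt{\var X_i}$ factor equal to $1$ is the right thing to check.
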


The structure of the rest of this paper is as follows: in Section \ref{SecMultiplePeaks} we prove Theorem \ref{thm-MP-main} and Theorem \ref{thm-MP-free-energy-main}. In Section \ref{SecLargeDeviations} we prove Theorem \ref{thm-deviation} and in Section \ref{SecDevExp} we prove Theorem \ref{thm-var-exp}.

\section{Superconcentration implies multiple peaks} \label{SecMultiplePeaks}

This section is devoted to the proofs of Theorems~\ref{thm-MP-main} and \ref{thm-MP-free-energy-main}. Consider a Gaussian field $\X = \{X_i: i\in S\}$. By rescaling, we can assume without loss of generality that $\var X_i \leq 1$ for all $i \in S$. To lighten notation, this normalization will be assumed throughout this section. For a set $U \subset S$, we use $\X_U$ to denote the restriction of $\X$ to the indices in $U$. Define $M(\X) = \sup_{i \in S} X_i$ and $R(i, j) = \cov(X_i, X_j)$ for $i, j\in S$. In addition, define $m = \E(M(\X))$ and $\sigma = \sigma(M(\X))$.

\subsection{Proof of Theorem~\ref{thm-MP-main}}

Theorem~\ref{thm-MP-main} can be directly deduced by applying the following result to each Gaussian field $\X_N$ in the sequence.
\begin{theorem} \label{thmpeaks}
There exist absolute positive constants $C_1, C_2$ such that the following holds. For any
  $0<\epsilon, \delta, \zeta \leq 1$ with probability at least
$ 1 - \frac{C_1 \sigma^2}{m^2 \delta^2} - \zeta$
 there exists $A \subset S$ with cardinality at least
$\exp \left( \frac{C_2 \epsilon^2 \delta \zeta}{\sigma^2} \right)$
such that $X_i \ge (1 - \delta)m(\X)$ for each $i \in A$,
  and $|R(i, j)| < \epsilon$ for each distinct $i, j \in A$.
\end{theorem}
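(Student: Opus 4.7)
The plan is to follow the overall template of \cite{Chatterjee08}, passing through an overlap bound obtained by Gaussian interpolation, but to strengthen the combinatorial extraction step so as to produce an exponentially-in-$1/\sigma^2$ large near-orthogonal family instead of a polynomial one.

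\textbf{Step 1 (probability budget and near-maximum event).} By Chebyshev applied to $M(\X)-m$, the event $\{M(\X)\geq (1-\delta/2)m\}$ has probability at least $1-4\sigma^2/(\delta m)^2$, so on this event the set of near-peaks
$$H \;:=\; \{\,i\in S : X_i \geq (1-\delta)m\,\}$$
is non-empty. This already supplies the probability budget $1 - C_1\sigma^2/(m\delta)^2$ appearing in the theorem, and reduces the task to lower-bounding $|H|$ and extracting from it a large nearly-orthogonal subset.

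\textbf{Step 2 (overlap bound via Gaussian interpolation).} Let $\mu$ denote the uniform measure on $H$ (defined on $\{H\neq\emptyset\}$). The central quantitative input is
$$\E\!\left[\,\sum_{i,j} \mu(i)\mu(j)\,|R(i,j)|\,\right] \;\leq\; \frac{C\sigma^2}{\delta\, m}.$$
The ingredient is the Ornstein-Uhlenbeck interpolation $\X^t = e^{-t}\X + \sqrt{1-e^{-2t}}\,\X'$, for which Gaussian integration by parts identifies $\var(M(\X))$ with a time integral of $\E[R(T^0,T^t)]$, where $T^t=\argmax \X^t$. Replacing $T^t$ by a $\mu$-sample, at a loss controlled by the gap $\delta m$ between $H$ and the true maximum, transfers the variance bound into the displayed overlap estimate.

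\textbf{Step 3 (exponential extraction).} A naive second-moment conversion of the overlap bound produces only a polynomially large orthogonal subset, and recovers \cite[Theorem 3.7]{Chatterjee08}. To reach the exponential size $\exp(C_2\epsilon^2\delta\zeta/\sigma^2)$ my plan is to pair the overlap estimate with an entropy-type lower bound on $|H|$: using the free energy $F_\beta=\beta^{-1}\log\sum_i e^{\beta X_i}$, Borell's inequality gives concentration of $F_\beta$; the sandwich $M \leq F_\beta \leq M + \beta^{-1}\log|H|$, combined with the variance bound $\var(M)=\sigma^2$, can then be balanced over $\beta$ to force $\log|H|\gtrsim 1/\sigma^2$ with high probability. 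Once $|H|$ is exponentially large, I sample $k := \exp(C_2\epsilon^2\delta\zeta/\sigma^2)$ i.i.d.\ points from $\mu$; the expected number of "bad" pairs satisfies $\E[\#\{a<b:|R(I_a,I_b)|>\epsilon\}] \leq k^2 \cdot O(\sigma^2/(\delta m\epsilon^2))$ by Step~2, and tuning $k$ against the probability parameter $\zeta$ via Markov produces a near-orthogonal subset of the claimed cardinality with probability at least $1-\zeta$.

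\textbf{Main obstacle.} The hardest ingredient is the exponential-cardinality lower bound on $|H|$ in Step~3. It is self-referential in spirit: a small $|H|$ would make the supremum behave like the maximum over a small population, forcing its variance to be bounded below and contradicting small $\sigma$; but turning this intuition into a quantitative bound compatible with the overlap estimate of Step~2 (and avoiding logarithmic losses in $N$) is where the new ideas beyond \cite{Chatterjee08} are required. The free-energy route described above is designed precisely to sidestep the $N$-dependence, trading it for the desired $1/\sigma^2$ factor in the exponent.
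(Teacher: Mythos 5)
Your Step 3 has a fundamental gap: the sampling argument cannot produce an exponentially large near-orthogonal set, regardless of how large $|H|$ is. If the expected number of ``bad'' pairs among $k$ i.i.d.\ samples from $\mu$ is $\Theta(k^2 \sigma^2 / (\delta m \epsilon))$ (your Step 2 estimate), then demanding, via Markov, that with probability $1-\zeta$ you can delete one endpoint from each bad pair and still retain $\Omega(k)$ near-orthogonal points requires $k \lesssim \delta m \epsilon \zeta / \sigma^2$, which is polynomial in $1/\sigma$. Lower-bounding $|H|$ by $e^{c/\sigma^2}$ does not fix this: the overlap estimate $\E[\sum_{i,j}\mu(i)\mu(j)|R(i,j)|] \leq C\sigma^2/(\delta m)$ is perfectly consistent with $H$ decomposing into roughly $\delta m /\sigma^2$ clusters of mutually highly-correlated points (with negligible cross-cluster overlap), in which case the largest near-orthogonal subset is polynomial even though $|H|$ is exponential. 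In short, the overlap/interpolation route inherits the polynomial ceiling of \cite[Theorem 3.7]{Chatterjee08}, which is precisely the barrier the theorem is claiming to break.

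The paper takes an entirely different route and in particular does not use the Ornstein--Uhlenbeck interpolation or the overlap moment at all. The exponential size comes from a deterministic geometric lemma (Lemma~\ref{deterministic many orthogonal}): if the supremum of a Gaussian process $\{\langle \Gamma, v_i\rangle + \mu_i\}$ lands in an interval $[r,s]$ of length $\leq \epsilon/8$ with probability $\geq 3/4$, then any \emph{maximal} $\epsilon$-orthogonal subset $A$ must satisfy $|A| \geq e^{\epsilon^2/32(r-s)^2}$. The mechanism is an injective ``push'' map $f_c(x) = x + c\eta v_a$ (with $(a,\eta)\in A\times\{\pm1\}$ chosen to maximize the resulting supremum), which by maximality of $A$ always raises the supremum by at least $c\epsilon$; a small $|A|$ forces a small $c$ to absorb the interval $[r,s]$, and the Gaussian density lower bound then shows that the image $f_c(U)$ has mass $> 1/4$, contradicting the concentration hypothesis. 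The second ingredient (Lemmas~\ref{U_t bound}, \ref{decomp-conc}, Corollary~\ref{decomp-conc-cor}) is a Gaussian decomposition $\X = \alpha\X' + \sqrt{1-\alpha^2}\X''$ that shows the high-level set $U' = \{i : X'_i \geq (1-\delta)m\}$ is, conditionally on $\X'$, a sub-process whose conditional maximum is concentrated on a short interval -- exactly what Lemma~\ref{deterministic many orthogonal} needs. To repair your proposal you would need to replace both Step 2 and Step 3 by something along these lines; no variant of the second-moment extraction can reach an exponential cardinality.
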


In order to prove Theorem~\ref{thmpeaks}, we consider the random set $U_{1-\delta} = \{i\in S: X_i \geq (1-\delta) m\}$, and wish to show that we can find a large near-orthogonal subset (that is, a subset where the pair-wise correlations are at most $\epsilon$) in $U_{1-\delta}$. A preliminary and seemingly innocent question is whether we are able to find a large subset of $S$ of near-orthogonal variables.  It turns out that the concentration property for the supremum of the Gaussian field on $S$ guarantees the existence of a large near-orthogonal subset of $S$, as shown in Lemma~\ref{deterministic many orthogonal} below. In light of this it would then suffice to prove that, fixing the random set $U_{1-\delta}$ and considering an independent copy of $\X$, the supremum over $U_{1-\delta}$ exhibits a good concentration property. The main ingredients for the proof of this fact are in the content of Lemmas~ \ref{U_t bound} and \ref{decomp-conc} below. \\

We begin with the deterministic claim that any Gaussian process which exhibits superconcentration has a large subset of near-orthogonal variables.

\begin{lemma} \label{deterministic many orthogonal}
  Let $\X = \{ X_i : i \in S \}$ be a (not necessarily centered) Gaussian process such that $\var(X_i) \leq 1$ for all $i \in S$. For a given $\epsilon > 0$, if $[r, s]$ is an
  interval of length at most $\frac{\epsilon}{8}$ such that
  \begin{equation} \label{asumpinterval}
  \P(M(\X) \not\in [r, s]) < \frac{1}{4},
  \end{equation}
  then there exists $A \subset S$ such that
\begin{equation} \label{ALarge}
|A| \ge \mathrm{e}^{\frac{\epsilon^2}{32(r - s)^2}},
\end{equation}
  \noindent and for every distinct $i, j \in A$, $|R(i, j)| \le
  \epsilon$.
\end{lemma}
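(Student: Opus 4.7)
The approach I would take is a proof by contrapositive. Assume that the largest subset $A \subseteq S$ with pairwise covariances bounded by $\epsilon$ in absolute value has size $K$ strictly less than $\exp(\epsilon^2/(32(s-r)^2))$, and derive a contradiction with $\P(M(\X) \in [r, s]) \geq 3/4$ via a quantitative anti-concentration estimate on $M(\X)$.

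First I would leverage the maximality of $A$ to partition $S$: for each $i \in S$, pick a representative $j(i) \in A$ with $j(i) = i$ when $i \in A$ and $|R(i, j(i))| > \epsilon$ otherwise, producing $S = \bigsqcup_{j \in A} S_j$ where $S_j = \{i : j(i) = j\}$. For $i \in S_j \setminus \{j\}$, perform the standard orthogonal decomposition $X_i = \alpha_i X_j + Z_i$ with $Z_i \perp X_j$ and $\alpha_i = R(i,j)/\var X_j$. Two observations are essential: Cauchy--Schwarz combined with $\var X_i \leq 1$ and $|R(i,j)| > \epsilon$ forces $\var X_j > \epsilon^2$, and $|\alpha_i| \leq 1/\sqrt{\var X_j} \leq 1/\epsilon$. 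Consequently, writing $M(\X) = \max_{j \in A} M_j$ with $M_j = \max_{i \in S_j}(\alpha_i X_j + Z_i)$, each $M_j$ is (conditional on $\{Z_i\}_{i \in S_j}$) a convex, piecewise-affine function of $X_j$ with slope magnitudes bounded below by $\epsilon$, so $M_j^{-1}([r,s])$ is a union of at most two intervals of total length at most $2(s-r)/\epsilon$ in $X_j$.

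The heart of the proof would then be an anti-concentration estimate $\P(M(\X) \in [r,s]) \leq C(s-r)\sqrt{\log K}/\epsilon$. Combined with the hypothesis $\P(M(\X) \in [r,s]) \geq 3/4$ this would force $\sqrt{\log K} \geq c\epsilon/(s-r)$, i.e.\ $\log K \geq c\epsilon^2/(s-r)^2$, contradicting the upper bound on $K$. A naive union bound $\P(M \in [r,s]) \leq \sum_{j \in A} \P(M_j \in [r,s]) \leq K \cdot C(s-r)/\epsilon^2$ (combining the $2(s-r)/\epsilon$ preimage-length bound with the Gaussian density bound $\leq 1/(\sqrt{2\pi}\, \epsilon)$ on $X_j$, valid since $X_j$ is independent of $\{Z_i\}_{i \in S_j}$) only yields the polynomial improvement $K \gtrsim \epsilon^2/(s-r)$, which is much weaker than the claimed exponential bound.

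The main obstacle is precisely obtaining the $\sqrt{\log K}$ improvement over the naive union bound. This reflects the well-known fact that the max of $K$ nearly-independent Gaussians has density near its mode of order $\sqrt{\log K}/\sigma$ rather than $K/\sigma$, because only an $O(1/\sqrt{\log K})$-fraction of the variables contribute appreciably to the maximum at any given level. To establish this rigorously in our setting, one might bound the density $f_M(t) = \sum_i \phi_{X_i}(t)\, \P(X_k \leq t\,\forall k \neq i \mid X_i = t)$ by carefully exploiting the conditional survival probabilities, or employ an iterative peeling scheme that amplifies a polynomial bound into an exponential one by successive refinement.
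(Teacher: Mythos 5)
Your proposal correctly sets up the contrapositive frame: take a maximal pairwise-near-orthogonal set $A$, decompose each $X_i$ against its representative $b(i)\in A$, and try to show that if $|A|$ is small, then $M(\X)$ cannot concentrate in a short interval. The decomposition observations are also correct (maximality forces $|R(i,b(i))|>\epsilon$, Cauchy--Schwarz gives $\var X_{b(i)}>\epsilon^2$, and the slopes $\alpha_i$ are bounded below). However, you yourself point out that the resulting union bound gives only a polynomial improvement in $\epsilon/(s-r)$, not the required exponential one, and the final paragraph of your proposal is a speculation about how one \emph{might} close the gap rather than a proof. So as written there is a genuine gap at precisely the step you flag: you need the $\sqrt{\log K}$ density improvement (equivalently, an inequality of the form $\P(M\in[r,s])\lesssim (s-r)\sqrt{\log K}/\epsilon$), and neither the density-formula idea nor the "iterative peeling" idea is carried out. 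The situation is further complicated by the fact that the conditional maxima $M_j$ are neither independent nor identically scaled, so the heuristic "density near the mode is $\sqrt{\log K}/\sigma$" is not directly applicable.

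The paper avoids the anti-concentration route entirely by a measure-transport argument. Working in the canonical Gaussian representation $X_i=\langle\Gamma,v_i\rangle+\mu_i$, it defines a piecewise-linear map $f_c(x)=x+c\eta v_a$, where $(a,\eta)\in A\times\{-1,1\}$ is chosen to maximize $m(x+c\eta v_a)$, and shows (using a slight generic-position perturbation) that $f_c$ is injective and that $m(f_c(x))\ge m(x)+c\epsilon$ (because the gradient of $m$ is some $v_{i(x)}$ with $|\langle v_{i(x)},v_{b(i(x))}\rangle|\ge\epsilon$). Taking $c=(s-r)/\epsilon$ and $U=\{m(x)\ge r,\ \sup_{a\in A}\langle x,v_a\rangle\le\sqrt{2\log|A|}+3\}$, one has $\gamma(U)\ge 5/8$, $f_c(U)\subset\{m\ge s\}$, and the Gaussian density under $x\mapsto x+c\eta v_a$ drops by at most $e^{-c(\sqrt{2\log|A|}+3)-c^2/2}$ on $U$. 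The crucial point is that this drop is controlled by $c\sqrt{\log|A|}$, so if $|A|$ is small enough (namely fails \eqref{ALarge}), then $\gamma(f_c(U))>1/4$, contradicting \eqref{asumpinterval}. This is exactly where the $\sqrt{\log K}$ phenomenon enters, but via the a.s.\ boundedness of $\sup_{a\in A}\langle\Gamma,v_a\rangle$ rather than via a density estimate. If you want to rescue your approach, this is the mechanism you would need to replicate; the shift-and-compare-densities argument is substantially easier to make rigorous than the conditional-density computation you sketch.
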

\begin{proof}
  Let $A \subset S$ be a maximal set (with respect to inclusion) satisfying
  $|R(i, j)| \le \epsilon$ for all $i, j \in A$. We will show that such a set must satisfy (\ref{ALarge}).

For each $i \in S$, let $b(i)$ denote the element of $A$ which maximizes $|R(i,
  b(i))|$. Note that by the maximality of $A$, we necessarily have
  $|R(i, b(i))| \ge \epsilon$ for all $i \in S$.


  We now consider the probability space underlying $\X$ as a standard
  $N$-dimensional Gaussian $\Gamma$ with density $\gamma$. Let
  $\{v_i\}_{i \in S}$ be vectors with norm at most 1 such that $X_i = \langle \Gamma,
  v_i \rangle + \mu_i$, so that $\mu_i = \E(X_i)$ and $\langle v_i,
  v_j \rangle = R(i, j)$. Define for $x \in \R^N$,

  \[ m(x) = \sup_{i \in S} \langle x, v_i \rangle + \mu_i \]
so that $m(\Gamma) \sim M(\X)$. In addition, we define
  \[ i(x) = \argmax_{i \in S} \left ( \langle x, v_i \rangle + \mu_i \right )\,.\]

  For a positive constant $c>0$ to be specified later, define a
  piecewise linear mapping $f_c: \R^N \to \R^N$ as follows: for a
  point $x \in \R^N$, let $(a, \eta)$ be the element of $A \times \{
  -1, 1 \}$ which maximizes $m(x + c \eta v_a)$, and define $f_c(x) = x
  + c \eta v_a$.

  Our next goal is to show that the function $f_c$ is injective. To do this, we fix $x \in \RR^N$, and let $(a, \eta)$ be as above. For notational convenience, write $y = f_c(x)$. Then, by the definition of $f_c$,

  \[ \langle x + c \eta v_a, v_{i(y)} \rangle + \mu_{i(y)} = \langle y, v_{i(y)} \rangle + \mu_{i(y)} = m(y) \ge \langle x \pm cv_{b(i(y))}, v_{i(y)} \rangle + \mu_{i(y)}, \]

  \noindent where the plus or minus indicates that the inequality
  holds for either choice of sign. It follows that

  \[ \left| \langle v_a, v_{i(y)} \rangle \right| \ge \left| \langle v_{b(i(y))}, v_{i(y)} \rangle \right|. \]

  \noindent On the other hand, by the definition of $b(i(y))$ we have

  \[ \left| \langle v_{b(i(y))}, v_{i(y)} \rangle \right| \ge \left| \langle v_a, v_{i(y)} \rangle \right|. \]
Now, we observe that it is legitimate to assume that the values $|\langle v_{i_1}, v_{i_2} \rangle|$ where $i_1, i_2 \in S$ and $i_1 \leq i_2$ are all distinct.
  Indeed, if this is not the case, then we may apply small random perturbation to each of the vectors $v_i$ and use the fact that the claim of the lemma is continuous with
  respect to these perturbations. Using this assumption, we may actually assume that $b(i(y)) = a$. Therefore,
  $$
  x = y - c v_{b(i(y))} \mathrm{sign}( \langle v_{b(i(y))}, y \rangle )\,,
  $$
  where $\mathrm{sgn}(x) = x/|x|$ if $x\neq 0$ and $\mathrm{sgn}(0) = 0$.
This completes the verification that $f_c$ is injective. \\

\noindent
Next, we fix $c = \frac{s - r}{\epsilon}$ and consider the set

  \[ U = \left\{ x \in \R^N :m(x) \ge r, \, \sup_{a \in A} \langle x, v_a \rangle \le \sqrt{2 \log |A|} + 3  \right\}. \]
We claim that
\begin{equation} \label{mfcu}
x \in U \Rightarrow m(f_c(x)) \geq s.
\end{equation}
Indeed, we have for all $x \in \RR^N$,
  \[ m(f_c(x)) \ge m(x \pm c v_{b(i(x))}) \ge \langle x \pm c v_{b(i(x))}, v_{i(x)} \rangle + \mu_{i(x)} \ge \langle x, v_{i(x)} \rangle + c \epsilon + \mu_{i(x)} = m(x) + c \epsilon\,. \]
In addition, under the assumption $x \in U$, we have
$$
m(f_c(x)) \geq m(x) + c \epsilon \geq r + c \epsilon = s\,,
$$
thereby proving (\ref{mfcu}). Now, equation (\ref{mfcu}) implies that
  \[ \P\left(M(\X) \ge s \right) \ge \gamma(f_c(U)) \,.\]
Therefore, we conclude from the assumption (\ref{asumpinterval}) that necessarily
\begin{equation} \label{contradictthis}
\gamma(f_c(U)) \leq \frac{1}{4}.
\end{equation}

In the following, we will suppose for the sake of contradiction that equation (\ref{ALarge}) is not satisfied and conclude that $\gamma(f_c(U)) > \frac{1}{4}$, thus concluding the lemma. \\

By \eqref{eq-simple-upper-exp}, we have  $\E(\sup_{i \in A} \langle \Gamma, v_i \rangle ) \le \sqrt{2 \log |A|}$. Furthermore, a simple application
of \eqref{eq:borel} show that the above maximum is relatively concentrated around its expectation in the sense that
  \[ \P \left (\sup_{i \in A} \langle \Gamma, v_i \rangle \ge \sqrt{2 \log |A|} + 3 \right ) \le \frac{1}{8}.\]

  \noindent We also have by hypothesis

  \[ \P(M(\X) \le r) \le \frac{1}{4}. \]

  \noindent Thus, $\gamma(U) \ge \frac{5}{8}$. Note that for any $x, v
  \in \R^N$ with $|v| \leq  1$ and $\langle x, v \rangle \le R$, we have

  \[ \mathrm{e}^{-\frac{\|x + cv\|^2}{2}} \ge \mathrm{e}^{-cR - \frac{c^2}{2}}\mathrm{e}^{-\frac{\|x\|^2}{2}}. \] 

  \noindent Thus,

  \[ \gamma(f_c(U)) \ge \mathrm{e}^{-c(\sqrt{2 \log |A|} + 3) - \frac{c^2}{2}} \gamma(U). \]

Recall the hypothesis that
  $s - r \le \frac{\epsilon}{8}$, so $c \le \frac{1}{8}$. Also, note
  that $c \sqrt{2 \log |A|} \le \frac{1}{4}$. Hence,

  \[ \gamma(f_c(U)) \ge \mathrm{e}^{-c\sqrt{2 \log |A|} - \frac{1}{2}} \gamma(U)  \ge \mathrm{e}^{-\frac{3}{4}} \cdot \frac{5}{8} > \frac{1}{4} \]
which contradicts (\ref{contradictthis}), and the lemma is proven.
\end{proof}

For a Gaussian process $\X = \{X_i, i \in S \}$ and a set $U \subset S$, we recall that $\X_{U} = \{X_i, i \in U\}$ is the
process $\X$ restricted to the set $U$.

\begin{lemma} \label{U_t bound}
  Let $\X$ be a centered Gaussian process, with $m = \E(M(\X))$ and
  $\sigma = \sigma(M(\X))$. For any real number $t \in (0, 1)$, define
  $U_t = \{ i \mid X_i \ge tm \}$. Then for any $\lambda > 0$,

  \[ \P \left( M(\X'_{U_t}) \ge \sqrt{1 - t^2} \cdot m + \frac{\lambda}{\sqrt{1 - t^2}} \right) \le \frac{\sigma^2}{\lambda^2}, \]

  \noindent where $M(\X'_{U_t}) = \sup_{i\in U_t} X'_i$ and $\X'$ is an independent copy of $\X$.
\end{lemma}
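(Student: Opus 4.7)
My plan is to use a Gaussian coupling trick that turns the random-set supremum $\sup_{i\in U_t} X'_i$ into a supremum of the original field $\X$, at the cost of a deterministic shift. The key idea is that the exchangeability of $\X$ and $\X'$ lets us rotate within their joint law: if I set
\[
Z_i \;\df\; t X_i + \sqrt{1-t^2}\,X'_i,
\]
then, since $\X$ and $\X'$ are independent identically distributed centered Gaussian processes, a direct covariance computation shows that $\{Z_i\}_{i\in S}$ is equidistributed with $\{X_i\}_{i\in S}$. In particular $M(Z) \df \sup_{i\in S} Z_i$ has the same law as $M(\X)$, so it concentrates around $m$ with variance $\sigma^2$.

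The next step is to invert the relation to express $X'_i = (Z_i - t X_i)/\sqrt{1-t^2}$, and then to exploit the definition of $U_t$. For each index $i \in U_t$ one has $X_i \ge tm$, so
\[
X'_i \;\le\; \frac{Z_i - t^2 m}{\sqrt{1-t^2}}.
\]
Taking the supremum over $i \in U_t$ and using the trivial bound $\sup_{i\in U_t} Z_i \le M(Z)$ yields the deterministic envelope
\[
\sup_{i\in U_t} X'_i \;\le\; \frac{M(Z) - t^2 m}{\sqrt{1-t^2}}.
\]

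Finally, Chebyshev's inequality applied to $M(Z)$, which has mean $m$ and variance $\sigma^2$, gives $\P(M(Z) \ge m + \lambda) \le \sigma^2/\lambda^2$. On the complementary event one has
\[
\frac{M(Z) - t^2 m}{\sqrt{1-t^2}} \;\le\; \frac{(1-t^2)m + \lambda}{\sqrt{1-t^2}} \;=\; \sqrt{1-t^2}\,m + \frac{\lambda}{\sqrt{1-t^2}},
\]
which completes the proof once combined with the previous display. In this argument there is no serious obstacle: the only nontrivial point is spotting the right decomposition of $X'_i$, and the rest is a one-sided bound plus Chebyshev. The tightness of the constant $\sqrt{1-t^2}$ in front of $m$ comes precisely from the fact that this coupling is optimal in the sense that $\cov(Z_i, X_i) = t$.
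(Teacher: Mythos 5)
Your proof is correct and is essentially the same as the paper's: both introduce the auxiliary process $Z = t\X + \sqrt{1-t^2}\X'$, observe it is equidistributed with $\X$, derive the deterministic bound $M(\X'_{U_t}) \le (M(Z)-t^2m)/\sqrt{1-t^2}$ by using $X_i \ge tm$ on $U_t$, and finish with Chebyshev applied to $M(Z)$. The only difference is presentational (you invert the linear relation for $X'_i$ rather than chaining inequalities from $M(Z)$ downward), but the argument is identical.
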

\begin{proof}
  Let $\X'' = t \X + \sqrt{1 - t^2}\X'$, so that $\X''$ is equal in
  distribution to $\X$. Then,

  \[ M(\X'') \ge \sup_{i \in U_t} \X''_i \ge t^2m +\sqrt{1 - t^2} M(\X'_{U_t}) \]
  \[ M(\X'_{U_t}) \le \frac{M(\X'') - t^2m}{\sqrt{1 - t^2}}. \]

  \noindent By Chebyshev's inequality,

  \[ \P(M(\X'') \ge m + \lambda) \le \frac{\sigma^2}{\lambda^2}, \]

  \noindent which gives the desired inequality.
\end{proof}

\begin{lemma} \label{decomp-conc}
  Let $\X$ be a centered Gaussian process, with $m = \E(M(\X))$ and
  $\sigma = \sigma(M(\X))$. For any $t \in (\frac{1}{2}, 1)$, we may
  write $\X = t\X' + \sqrt{1 - t^2}\X''$, where $\X'$ and $\X''$ are
  independent copies of $\X$. For $s \in (0, 1)$, define $U'_s = \{ i
  \in S : \X'_i \ge sm \}$. If $\lambda > 1 - t$, then

  \[ \P \left( \sup_{i \in S, \, i \not\in U'_{t - \lambda}} \X_i \ge \left( 1 - c_1\lambda \right) m \right) \le \frac{C_2 \sigma^2}{m^2 \lambda^2}, \]
  where $c_1$ and $C_2$ are absolute constants.
\end{lemma}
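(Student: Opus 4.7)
The plan is to apply Lemma~\ref{U_t bound} after inverting the coupling $\X = t\X' + \sqrt{1-t^2}\X''$. A covariance computation shows that one may equivalently write $\X' = t\X + \sqrt{1-t^2}\tilde\X$, where $\tilde\X = \sqrt{1-t^2}\,\X' - t\X''$ is a copy of $\X$ that is independent of $\X$. The reason for switching decompositions is that the direct bound $\sup_{i \notin U'_{t-\lambda}} \X_i \leq t(t-\lambda)m + \sqrt{1-t^2}\,M(\X'')$ is too weak when $t$ is close to $1$ and $\lambda \approx 1-t$: then $\sqrt{1-t^2}$ is of order $\sqrt{\lambda}$ rather than $\lambda$, and Chebyshev on $M(\X'')$ loses the needed factor.

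On the event $E = \{\sup_{i \notin U'_{t-\lambda}} \X_i \geq (1-c_1\lambda)m\}$ there exists an index $i$ with $\X_i \geq (1-c_1\lambda)m$ and $\X'_i < (t-\lambda)m$. Substituting $\X'_i = t\X_i + \sqrt{1-t^2}\,\tilde\X_i$ into the second inequality and using the first yields
\[
\tilde\X_i \;<\; -\,\frac{(1-c_1 t)\,\lambda\, m}{\sqrt{1-t^2}},
\]
provided $c_1 < 1$. Setting $\hat\X = -\tilde\X$ (again a copy of $\X$, independent of $\X$), $s = 1 - c_1\lambda$, and $U_s = \{j \in S : \X_j \geq sm\}$, the event $E$ is therefore contained in $\bigl\{\sup_{i \in U_s} \hat\X_i \geq \frac{(1-c_1 t)\lambda m}{\sqrt{1-t^2}}\bigr\}$. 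The pair $(\X, \hat\X)$ is now exactly in the configuration required by Lemma~\ref{U_t bound}.

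Applying Lemma~\ref{U_t bound} to $\X$ with independent copy $\hat\X$ and parameter $s$ gives $\P\bigl(\sup_{i \in U_s} \hat\X_i \geq \sqrt{1-s^2}\,m + \mu/\sqrt{1-s^2}\bigr) \leq \sigma^2/\mu^2$ for any $\mu > 0$. I would then choose $\mu$ so that $\sqrt{1-s^2}\,m + \mu/\sqrt{1-s^2} = \frac{(1-c_1 t)\lambda m}{\sqrt{1-t^2}}$ and use the hypothesis $\lambda > 1-t$, together with the bounds $\sqrt{1-t^2} \leq \sqrt{2\lambda}$ and $\sqrt{1-s^2} \leq \sqrt{2c_1\lambda}$, to deduce $\mu \geq c\,\lambda\,m$ for an absolute constant $c > 0$, as long as $c_1$ is small enough that $(1-c_1)/\sqrt{2} > \sqrt{2c_1}$ (for instance $c_1 = 1/8$). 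This yields the claimed bound $C_2\,\sigma^2/(m^2\lambda^2)$. The main conceptual step is the change of decomposition; once in the right framework, the rest is direct arithmetic, and the key thing to verify is that the hypothesis $\lambda > 1 - t$ is just strong enough to control both denominators $\sqrt{1-t^2}$ and $\sqrt{1-s^2}$ by $O(\sqrt{\lambda})$.
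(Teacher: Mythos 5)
Your proof is correct, and it takes a genuinely different route from the paper's. The paper keeps the forward decomposition $\X = t\X' + \sqrt{1-t^2}\X''$ and observes that if $\X_i$ is large and $\X'_i \in [sm,(s+\Delta)m]$ then $\X''_i$ must be large; but the required size of $\X''_i$ depends on the slice $s$, so the paper partitions $(-\infty,(t-\lambda)m]$ into slices of width $\Delta m = \frac{\lambda m}{4C}$, invokes Lemma~\ref{U_t bound} once per slice via the inequality $1-st-\sqrt{(1-s^2)(1-t^2)}\ge (t-s)/C$, and sums a convergent series $\sum \ell^{-2}$. Your inversion $\X' = t\X + \sqrt{1-t^2}\tilde\X$ eliminates the need for slicing: the two inequalities $\X_i \ge (1-c_1\lambda)m$ and $\X'_i < (t-\lambda)m$ together pin $\tilde\X_i$ below $-\frac{(1-c_1 t)\lambda m}{\sqrt{1-t^2}}$ uniformly, regardless of where $\X'_i$ actually sits, so a single application of Lemma~\ref{U_t bound} with threshold level $s = 1 - c_1\lambda$ suffices. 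This is shorter, avoids the elementary inequality (\ref{steq}), and the only arithmetic is verifying that $\eta := \sqrt{1-s^2}\bigl(\frac{(1-c_1 t)\lambda m}{\sqrt{1-t^2}} - \sqrt{1-s^2}\,m\bigr) \gtrsim \lambda m$, which, using $\sqrt{c_1\lambda} \le \sqrt{1-s^2}\le \sqrt{2c_1\lambda}$ and $\sqrt{1-t^2}\le\sqrt{2\lambda}$, holds once $\frac{1-c_1}{\sqrt 2} > \sqrt{2c_1}$, e.g.\ $c_1 = 1/8$ gives $\eta \ge c\lambda m$ with $c\approx 0.04$. Both proofs yield the same qualitative conclusion; yours buys brevity and a more transparent mechanism (a direct deviation for the independent component $\tilde\X$), while the paper's is perhaps more robust if one ever needed to localize which slice $\X'_i$ fell in.
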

\begin{proof}
  Let $s$ be a non-negative real number such that $s \le t -
  \lambda$. Note that the condition $\lambda > 1 - t$ ensures $t - s > 1
  - t$. We have the elementary inequality

  \begin{align} \label{steq}
    1 - st - \sqrt{(1 - s^2)(1 - t^2)} &\ge \frac{1}{2} \left( \sqrt{1 - s^2} - \sqrt{1 - t^2} \right)^2 \nonumber\\
    &= \frac{1}{2} \cdot \frac{(t - s)^2(t + s)^2}{\left( \sqrt{1 - s^2} + \sqrt{1 - t^2} \right)^2}
    \ge \frac{(t - s)^2}{32(1 - t^2)} \ge \frac{(t - s)^2}{64(1 - t)} \ge \frac{t - s}{C}\,,
  \end{align}
  where $C = 64$ is an absolute constant.

  Let $\Delta = \frac{\lambda}{4C}$, and consider the random set $V'_s
  = \{ i \in S : \X'_i \in [sm, (s + \Delta)m] \}$. Note that $V'_s
  \subset U'_s$. Suppose that $M(\X_{V'_s}) \ge \left( 1 - \frac{t -
    s}{4C} \right) m$. Then,

  \[ tM(\X'_{V'_s}) + \sqrt{1 - t^2}M(\X''_{V'_s}) \ge M(\X_{V'_s}) \ge \left( 1 - \frac{t - s}{4C} \right) m \]
  Since $M(\X'_{V'_s}) \le (s + \Delta)m$ by the definition of $V'_s$,
  rearranging the above inequality yields

  \[ \sqrt{1 - t^2}M(\X''_{V'_s}) \ge \left( 1 - \frac{t - s}{4C} - t(s + \Delta) \right) m \ge \left( 1 - ts - \frac{t - s}{2C} \right) m \]
  \[ \ge \left( \sqrt{(1 - s^2)(1 - t^2)} + \frac{t - s}{2C} \right) m, \]
  where in the second inequality we used $t\Delta \le \Delta \le
  \frac{t - s}{4C}$, and in the last inequality we used
  (\ref{steq}). Dividing through, we have

  \[ M(\X''_{V'_s}) \ge m \sqrt{1 - s^2} + \frac{(t - s) m}{2C \sqrt{1 - t^2}} \ge m \sqrt{1 - s^2} + \frac{(t - s)m}{2C \sqrt{1 - s^2}}. \]
  Since $M(\X''_{U'_s}) \ge M(\X''_{V'_s})$, we have

  \[ \P \left( M(\X_{V'_s}) \ge \left( 1 - \frac{t - s}{4C} \right) m \right) \le \P \left( M(\X''_{U'_s}) \ge m \sqrt{1 - s^2} + \frac{(t - s) m}{2C \sqrt{1 - s^2}} \right) \le \frac{4C^2\sigma^2}{m^2(t - s)^2}, \]
  where the last inequality follows from Lemma \ref{U_t bound}.

  The above inequality essentially says that the indices $i$ for which
  $X'_i \in [sm, (s + \Delta)m]$ cannot be large in $\X$. We now sum
  over $s = 0, 1, \ldots , \left\lfloor \frac{(t - \lambda)}{\Delta}
  \right\rfloor$.

\begin{align*}\P \left( \sup_{i \in S, \, i \not\in U'_{t - \lambda}} \X_i \ge \left( 1 - \frac{\lambda}{4C} \right) m \right) &\le \sum_{k = 0}^{\left\lfloor \frac{(t - \lambda)}{\Delta} \right\rfloor} \P \left(M(\X_{V'_{\Delta k}}) \ge \left( 1 - \frac{\lambda}{4C} \right) m \right) \\
&\le \sum_{k = 0}^{\left\lfloor \frac{(t - \lambda)}{\Delta} \right\rfloor} \P \left(M(\X_{V'_{\Delta k}}) \ge \left( 1 - \frac{t - \Delta k}{4C} \right) m \right) \\
  & \le \sum_{k = 0}^{\left\lfloor \frac{(t - \lambda)}{\Delta} \right\rfloor} \frac{4C^2\sigma^2}{m^2(t - \Delta k)^2} \le \frac{4C^2\sigma^2}{m^2\Delta^2} \sum_{\ell = 1}^\infty \frac{1}{\ell^2}. \end{align*}

  \noindent Since the last series converges, and $\Delta =
  \frac{\lambda}{4C}$, this proves the lemma.
\end{proof}

\begin{cor} \label{decomp-conc-cor}
  Let $\X = \{X_i; ~ i \in S\}$ be a centered Gaussian process such that $\var[X_i] \leq 1$ for all $i \in S$. For a given $\delta > 0$, let $\alpha = 1 - \frac{\delta}{4}$. Write

  \[ \X = \alpha \X' + \sqrt{1 - \alpha^2} \X'', \]

  \noindent where $\X'$ and $\X''$ are independent copies of
  $\X$. Then,

  \[ \P \left (\X'_{i(\X)} \ge (1 - \delta) m \right ) \ge 1 - \frac{C \sigma^2}{m^2 \delta^2} \]

  \noindent for an absolute constant $C>0$, where $\sigma^2 = \var[M(X)]$.
\end{cor}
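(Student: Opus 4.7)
The plan is to reduce Corollary \ref{decomp-conc-cor} to Lemma \ref{decomp-conc} combined with a Chebyshev bound on $M(\X)$. The key observation is that $\X'_{i(\X)} \ge (1-\delta) m$ is exactly the statement $i(\X) \in U'_{1-\delta}$ in the notation of Lemma \ref{decomp-conc}. So, with $t = \alpha = 1 - \delta/4$, I want to choose $\lambda$ so that $t - \lambda = 1 - \delta$, which forces $\lambda = 3\delta/4$. Note that $\lambda = 3\delta/4 > \delta/4 = 1 - t$, so the hypothesis $\lambda > 1 - t$ of Lemma \ref{decomp-conc} is satisfied.

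With these choices, Lemma \ref{decomp-conc} gives immediately
$$
\P\Big(\sup_{i \in S,\, i \notin U'_{1-\delta}} \X_i \ge \big(1 - \tfrac{3 c_1}{4}\delta\big) m\Big) \le \frac{C_2 \sigma^2}{m^2 (3\delta/4)^2} \le \frac{C' \sigma^2}{m^2 \delta^2}.
$$
Independently, since $\var[M(\X)] = \sigma^2$, Chebyshev's inequality yields
$$
\P\big(M(\X) < \big(1 - \tfrac{3 c_1}{4}\delta\big) m\big) \le \frac{\sigma^2}{(3 c_1 \delta/4)^2\, m^2} \le \frac{C'' \sigma^2}{m^2 \delta^2}.
$$

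On the intersection of the two complementary events, we have simultaneously $M(\X) \ge (1 - \tfrac{3 c_1}{4}\delta) m$ and $\sup_{i \notin U'_{1-\delta}} \X_i < (1 - \tfrac{3 c_1}{4}\delta) m$. Since $\X_{i(\X)} = M(\X)$ by definition of $i(\X)$, this forces $i(\X) \in U'_{1-\delta}$, i.e., $\X'_{i(\X)} \ge (1-\delta) m$. A union bound then gives failure probability at most $(C' + C'')\sigma^2/(m^2\delta^2)$, which is the desired estimate.

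There is no real obstacle beyond tracking constants: the only case requiring care is when $\delta$ is so large that $\tfrac{3 c_1}{4}\delta \ge 1$, making the Chebyshev bound trivial, but in that regime the conclusion holds by simply enlarging $C$ (since the claimed probability bound can be made vacuous). The main content of the argument was really done in Lemma \ref{decomp-conc}; this corollary is just the packaging of that result as a ``the $\X'$-coordinates of the $\X$-maximizer remain large'' statement, which is the form needed to feed into the proof of Theorem \ref{thmpeaks}.
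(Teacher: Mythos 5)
Your proof is correct and follows essentially the same route as the paper: set $t=\alpha$ and $\lambda=3\delta/4$ so that $U'_{t-\lambda}=U'_{1-\delta}$, apply Lemma~\ref{decomp-conc} to bound the probability that some index outside $U'_{1-\delta}$ achieves a value at least $(1-\tfrac{3}{4}c_1\delta)m$, apply Chebyshev to bound the probability that $M(\X)$ falls below that same threshold, and conclude by a union bound. (Your closing worry about the regime $\tfrac{3c_1}{4}\delta\ge 1$ is a non-issue: Chebyshev still gives a valid and in fact smaller bound there, and in any case the corollary is only ever invoked with $\delta\le 1$.)
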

\begin{proof}
  We use the notation of Lemma \ref{decomp-conc}, with $t = \alpha$
  and $\lambda = \frac{3}{4} \delta$. Then,
\begin{align*} \P \left (\X'_{i(\X)} < (1 - \delta) m \right ) &= \P\left(i(\X) \not\in U'_{1 - \delta}\right) \\
  & \le \P \Big( M(\X) \le (1 - \frac{3}{4}c_1 \delta)m \Big) + \P \left( \sup_{i \in S, \, i \not\in U'_{1 - \delta}} \X_i \ge (1 - \frac{3}{4}c_1\delta) m \right) \\
 &\le \frac{16\sigma^2}{9c_1^2m^2\delta^2} + \frac{16C_2 \sigma^2}{9m^2 \delta^2},
  \end{align*}
  by Chebyshev's inequality and Lemma \ref{decomp-conc}.
\end{proof}

We are finally ready to prove the connection between superconcetration and multiple peaks.

\begin{proof}[Proof of theorem \ref{thmpeaks}]
We begin fixing some $\zeta > 0$, denoting $\alpha = 1 -
\frac{\delta}{4}$ and considering the decomposition $\X = \alpha \X' +
\sqrt{1 - \alpha^2} \X''$, where $\X'$ and $\X''$ are independent
copies of $\X$. Define
$$
U' = \bigl \{ i \mid X'_i \ge (1 - \delta) \E(M(\X')) \bigr \}.
$$
Since $\X'$ has the same distribution as $\X$, it is enough to show that with probability at least
$1 - 4C\sigma^2/(m^2 \delta^2) - \zeta$ there exists a subset $A \subset U'$ such that
\begin{equation} \label{Abig}
|A| \geq \mathrm{e}^{\frac{\zeta \epsilon^2 (1-\alpha^2)}{32 \sigma^2}} \geq \mathrm{e}^{\frac{\zeta \epsilon^2 \delta}{128 \sigma^2}}
\end{equation}
and
\begin{equation} \label{Aortho}
i,j \in A, i \neq j \Rightarrow |R(i,j)| \leq \epsilon
\end{equation}
where $C>0$ is a universal constant. \
To this end, we consider
$$
\mathbf{Y} (\X') = \frac{\alpha \X'_{U'}}{\sqrt{1 - \alpha^2}} + \X''_{U'}\,.
$$
It is convenient to separate the two sources of randomness in $\mathbf Y(\X')$. In what follows, we will condition on the realization of $\X'$ (therefore also $U'$), and consider
$\mathbf{Y}(\X')$ as a non-centered Gaussian process indexed over set $U'$, where the randomness comes from the process $\X''$ and the mean vector is given by $\frac{\alpha \X'_{U'}}{\sqrt{1 - \alpha^2}}$.  Define
$$
g(\X') =    \left . \P \left( M(\mathbf{Y}(\X')) \in \left[ \frac{m - \sigma \zeta^{-1/2} }{\sqrt{1 - \alpha^2}}, \frac{m + \sigma \zeta^{-1/2} }{\sqrt{1 - \alpha^2}} \right] \right \vert \X' \right)
$$
and let $E$ be the event that $\{g(\X') \geq 3/4\}$. Note that $E$ is measurable in the $\sigma$-field generated by $\X'$. Clearly, whenever the event $E$ holds,
the Gaussian process $(\mathbf{Y}(\X'))$ (conditioned on $\X'$ in the aforementioned manner) satisfies the assumption (\ref{asumpinterval}) with
$$
s - r = \frac{2 \sigma}{\sqrt{\zeta} \sqrt{1 - \alpha^2}}.
$$
Therefore, by applying lemma \ref{deterministic many orthogonal} we learn that whenever $E$ holds there exists a subset $A \subset U'$ satisfying (\ref{Abig}) and (\ref{Aortho}). It thus remains to show that
\begin{equation} \label{pFeq}
\PP(E) > 1 - 4C\sigma^2/(m^2 \delta^2) - \zeta.
\end{equation}
To this end, we define another event
$$
F = \left \{ X'_{i(\X)} \ge (1 - \delta)m \right \}.
$$
By Corollary \ref{decomp-conc-cor}, we have
\begin{equation} \label{eqPe}
\P(F) > 1 - \frac{C\sigma^2}{m^2\delta^2}\,,
\end{equation}
where $C>0$ is a universal constant. Now, we remark that whenever the event $F$ holds, one has $$\sqrt{1 - \alpha^2} M( \mathbf{Y}(X')) = M(\mathbf{X}).$$
It follows from the definition of $g(\cdot)$ that
$$
\E g(\X') \geq  \P \left  (M(\X) \in \left [m- \frac{\sigma}{\sqrt{\zeta}}, m+ \frac{\sigma}{\sqrt{\zeta}} \right ] \right ) - \P \left (F^C \right )  \geq
1 - C\sigma^2/(m^2 \delta^2) - \zeta.
$$
where in the second passage, we used (\ref{eqPe}) and Chebychev's inequality.
An application of Markov's inequality with the last equation establishes (\ref{pFeq}), and the proof is complete.
\end{proof}

\subsection{Proof of Theorem~\ref{thm-MP-free-energy-main}}
For $\beta > 0$, define $F_\beta = (\frac{1}{\beta} \log \sum_{i\in S} \mathrm{e}^{\beta X_i})$, and denote by $\sigma_\beta^2 = \var F_\beta$.
Theorem~\ref{thm-MP-free-energy-main} can be deduced by applying the following result to each Gaussian field in the sequence.

\begin{theorem} \label{thmpeaks-free-energy}
There exist constants $C_1, C_2, C_3 >0$ such that the
following holds. Suppose that $ \sigma_\beta \leq \tilde \sigma_\beta$ for all $\beta \geq 0$. For any $0 < \delta, \epsilon, \zeta < 1$ and all $\beta \geq
C_1 \max \left( \frac{\log N}{ \delta m}, \frac{1}{\tilde{\sigma}_\beta},
\frac{\delta \epsilon^2}{\tilde{\sigma}_\beta^3} \right)$, with probability at
least $1 - \frac{C_2}{m^2 \delta^2} - \zeta$ there exists $A \subset
S$ with cardinality at least $\exp \left( \frac{C_3 \epsilon^2 \delta
  \zeta}{\tilde{\sigma}_\beta^2} \right)$ such that $X_i \ge (1 -
\delta)m(\X)$ for each $i \in A$, and $|R(i, j)| < \epsilon$ for each
distinct $i, j \in A$.
\end{theorem}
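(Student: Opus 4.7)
The plan is to adapt the proof of Theorem \ref{thmpeaks} nearly line-by-line, substituting the variance $\var(M(\X))$ with the free-energy variance bound $\tilde{\sigma}_\beta^2$ wherever a Gaussian concentration step is invoked. The bridge between the two regimes is the elementary sandwich
$$
M(\X) \;\leq\; F_\beta \;\leq\; M(\X) + \frac{\log N}{\beta},
$$
which allows any concentration statement about $F_\beta$ to be transferred to $M(\X)$ at the cost of an additive error $\log N/\beta$. The three lower bounds on $\beta$ in the statement are the natural thresholds that render this error negligible at different stages of the argument.

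The first technical step is a free-energy analog of Lemma \ref{U_t bound}. Starting from the coupling $\X'' = t\X + \sqrt{1-t^2}\X'$, the pointwise estimate $tX_i \geq t^2 m$ valid on $U_t$ yields
$$
F_\beta(\X'') \;\geq\; \frac{1}{\beta}\log \sum_{i \in U_t} e^{\beta X''_i} \;\geq\; t^2 m + \sqrt{1-t^2}\, F_{\beta\sqrt{1-t^2}}(\X'_{U_t}) \;\geq\; t^2 m + \sqrt{1-t^2}\, M(\X'_{U_t}),
$$
so $M(\X'_{U_t}) \leq (F_\beta(\X'') - t^2 m)/\sqrt{1-t^2}$. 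Since $\E F_\beta \leq m + \log N/\beta$ and $\var F_\beta \leq \tilde{\sigma}_\beta^2$, Chebyshev applied to $F_\beta(\X'')$ yields the same conclusion as Lemma \ref{U_t bound} with $\sigma$ replaced by $\tilde{\sigma}_\beta$ and an additional additive error of order $\log N/\beta$. The analog of Lemma \ref{decomp-conc} then follows by repeating the geometric-sum argument from the original proof; the first condition $\beta \geq C_1 \log N / (\delta m)$ is precisely what absorbs the $\log N/\beta$ correction into the $\delta m$ budget of the threshold, uniformly across the summation. A matching modification of Corollary \ref{decomp-conc-cor} gives $\P(X'_{i(\X)} \geq (1-\delta)m) \geq 1 - C\tilde{\sigma}_\beta^2/(m^2 \delta^2)$.

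The remainder of the argument mirrors the proof of Theorem \ref{thmpeaks}: one sets $\alpha = 1 - \delta/4$, decomposes $\X = \alpha \X' + \sqrt{1-\alpha^2}\X''$, defines $U' = \{i : X'_i \geq (1-\delta) m\}$, and applies Lemma \ref{deterministic many orthogonal} conditionally on $\X'$ to the Gaussian process $\mathbf{Y}(\X') = \frac{\alpha \X'_{U'}}{\sqrt{1-\alpha^2}} + \X''_{U'}$. On the event that $i(\X) \in U'$ one still has the identity $\sqrt{1-\alpha^2}\, M(\mathbf{Y}(\X')) = M(\X)$, so the concentration of $M(\mathbf{Y}(\X'))$ in a short interval can be extracted from Chebyshev on $F_\beta$ via the sandwich, with the resulting window of length $O\bigl((\tilde{\sigma}_\beta/\sqrt{\zeta} + \log N/\beta)/\sqrt{1-\alpha^2}\bigr)$. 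Lemma \ref{deterministic many orthogonal} then produces the advertised exponential bound $|A| \geq \exp(C_3 \epsilon^2 \delta \zeta/\tilde{\sigma}_\beta^2)$.

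The principal difficulty we foresee is not conceptual but purely a matter of bookkeeping: tracking the $\log N/\beta$ error term through every invocation of Chebyshev and verifying that the three stated conditions on $\beta$ precisely match the thresholds where this error must be dominated. The third condition $\beta \geq C_1 \delta \epsilon^2 /\tilde{\sigma}_\beta^3$ is the most delicate one; we expect it to arise in the final application of Lemma \ref{deterministic many orthogonal}, where after dividing by $\sqrt{1-\alpha^2} \sim \sqrt{\delta}$ the residual error $\log N/(\beta\sqrt{\delta})$ must be small compared both to the main width $\tilde{\sigma}_\beta/\sqrt{\zeta\delta}$ and to the admissible window $\epsilon/8$ from Lemma \ref{deterministic many orthogonal}, forcing a lower bound on $\beta$ of the form stated.
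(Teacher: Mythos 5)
Your plan reuses the original deterministic lemma (Lemma \ref{deterministic many orthogonal}, which requires concentration of the \emph{supremum}) by transferring the concentration of $F_\beta$ to $M$ through the sandwich $M \le F_\beta \le M + \log N/\beta$, and this is where the approach breaks. After conditioning on $\X'$ and on the event $i(\X)\in U'$, you want $M(\mathbf{Y}(\X'))$ confined to an interval of length $O\bigl(\tilde\sigma_\beta/(\sqrt{\zeta}\sqrt{1-\alpha^2})\bigr)$, but the sandwich only gives a window of length $O\bigl((\tilde\sigma_\beta/\sqrt{\zeta} + \log N/\beta)/\sqrt{1-\alpha^2}\bigr)$. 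The hypotheses of the theorem control $\log N / \beta$ only through $\beta \ge C_1 \log N / (\delta m)$, i.e. $\log N/\beta \lesssim \delta m$, and through $\beta \ge C_1/\tilde\sigma_\beta$, i.e. $1/\beta \lesssim \tilde\sigma_\beta$ (without the $\log N$ factor). Neither gives $\log N/\beta \lesssim \tilde\sigma_\beta/\sqrt{\zeta}$ — there is simply no hypothesis relating $\delta m$ to $\tilde\sigma_\beta$. Your speculation that the third condition $\beta \ge C_1 \delta\epsilon^2/\tilde\sigma_\beta^3$ absorbs this residual is incorrect: that condition gives $\log N/\beta \lesssim \tilde\sigma_\beta^3 \log N / (\delta\epsilon^2)$, which still carries the $\log N$ factor and is not $o(\tilde\sigma_\beta)$ in general. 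So the interval you would feed into Lemma \ref{deterministic many orthogonal} can be far longer than both $\tilde\sigma_\beta/\sqrt{\zeta\delta}$ and $\epsilon/8$, and the cardinality bound you extract does not match the claimed one.

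The paper's actual proof resolves this by never passing through $M$ at all. It proves a new deterministic lemma (Lemma \ref{deterministic many orthogonal free}) that operates directly on the free energy: the perturbation map $f_c$ is rebuilt using free-energy quantities $g_\beta^\chi$, and the shift parameter becomes $c = (\beta^{-1}\log(2|A|) + s - r)/\epsilon$, with the extra term $\beta^{-1}\log(2|A|)$ handled by the new hypothesis $\beta \ge \epsilon^2/(128(s-r)^3)$ — this is precisely where the third lower bound on $\beta$ originates, rather than from absorbing a sandwich error. Complementing this, the paper introduces Lemma \ref{free-energy-contrib}, which shows $|F_\beta(\X_{U'}) - F_\beta(\X)| \le 1/\beta$ (not $\log N/\beta$) with high probability: the condition $\beta > C\log N/(\delta m)$ makes $N e^{-\beta\delta' m}$ of order $1$, so the logarithm of $1 + N e^{-\beta\delta' m}$ is $O(1)$. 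The chain $F_\beta(\X)$ concentrated $\Rightarrow$ $F_\beta(\X_{U'})$ concentrated $\Rightarrow$ $F_{\beta'}(\mathbf{Y})$ concentrated $\Rightarrow$ apply Lemma \ref{deterministic many orthogonal free} then gives the result with the advertised constants. Your proposed free-energy variants of Lemma \ref{U_t bound} and \ref{decomp-conc} are not needed (and would only give a weaker probability $1 - C\tilde\sigma_\beta^2/(m^2\delta^2)$ rather than $1 - C_2/(m^2\delta^2)$; the paper simply bounds $\sigma^2(M(\X)) \le 1$ and reuses the original Corollary \ref{decomp-conc-cor}). The missing idea is the free-energy version of the injectivity/measure-lower-bound argument; without it the sandwich error cannot be removed.
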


The proof is similar to  that of
Theorem~\ref{thm-MP-main}, but a few changes are needed. In what
follows, we will omit details which are repeated from the proof of
Theorem~\ref{thm-MP-main} and highlight the differences. The reader is
advised to become familiar with the previous subsection before reading
this one.

First, we need a version of Lemma \ref{deterministic many orthogonal}
which deals with free energies.

\begin{lemma} \label{deterministic many orthogonal free}
  Let $\X = \{ X_i : i \in S \}$ be a (not necessarily centered)
  Gaussian process such that $\var(X_i) \leq 1$ for all $i \in S$. For
  a given $\epsilon > 0$, suppose that $[r, s]$ is an interval of
  length at most $\frac{\epsilon}{8}$ such that
  \begin{equation} \label{asumpinterval-beta}
  \P(F_\beta(\X) \not\in [r, s]) < \frac{1}{4},
  \end{equation}
  Furthermore, suppose that
  \begin{equation}
    \beta \ge \frac{\epsilon^2}{128(s - r)^3}.
  \end{equation}
  Then, there exists $A \subset S$ such that
\begin{equation} \label{ALargeBeta}
|A| \ge \mathrm{e}^{\frac{\epsilon^2}{128(r - s)^2}},
\end{equation}
  \noindent and for every distinct $i, j \in A$, $|R(i, j)| \le
  \epsilon$.
\end{lemma}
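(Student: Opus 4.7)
The plan is to mirror the architecture of the proof of Lemma~\ref{deterministic many orthogonal}, with $m(\cdot)$ replaced by the softmax analogue $F_\beta(x) = \frac{1}{\beta} \log \sum_{i \in S} \mathrm{e}^{\beta(\langle x, v_i\rangle + \mu_i)}$ throughout, making two targeted modifications to accommodate the fact that $F_\beta$ is no longer attained at a single index. I take $A \subseteq S$ to be a maximal set with $|R(i,j)| \leq \epsilon$ for all distinct $i, j \in A$; by maximality, for each $i \in S$ there exist $b(i) \in A$ and $\eta(i) \in \{\pm 1\}$ with $\eta(i) \langle v_{b(i)}, v_i\rangle \geq \epsilon$. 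Copy the definition of the map $f_c$ verbatim from the original proof: $f_c(x) = x + c\eta v_a$ where $(a, \eta) \in A \times \{\pm 1\}$ maximizes $F_\beta(x + c\eta v_a)$, with $c$ a small constant multiple of $(s-r)/\epsilon$ to be tuned at the end.

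The first modification replaces the pointwise inequality $m(f_c(x)) \geq m(x) + c\epsilon$. Summing the exponentials of all candidate free energies gives
\[
\sum_{(a, \eta) \in A \times \{\pm 1\}} \mathrm{e}^{\beta F_\beta(x + c\eta v_a)}
= \sum_{i \in S} \mathrm{e}^{\beta(\langle x, v_i\rangle + \mu_i)} \sum_{(a, \eta)} \mathrm{e}^{\beta c \eta \langle v_a, v_i\rangle}
\geq \mathrm{e}^{\beta c \epsilon} \sum_{i \in S} \mathrm{e}^{\beta(\langle x, v_i\rangle + \mu_i)},
\]
where in the inner sum I retain only the single term $(a, \eta) = (b(i), \eta(i))$. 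Since the outer sum has $2|A|$ terms, the maximand exceeds the mean, so $F_\beta(f_c(x)) \geq F_\beta(x) + c\epsilon - \log(2|A|)/\beta$. Choosing $c$ slightly larger than $(s-r)/\epsilon$, the hypothesis $\beta \geq \epsilon^2/(128(s-r)^3)$ together with the contradiction hypothesis $|A| < \mathrm{e}^{\epsilon^2/(128(s-r)^2)}$ absorbs the error term $\log(2|A|)/\beta$ into a fraction of $s - r$, yielding $F_\beta(x) \geq r \Rightarrow F_\beta(f_c(x)) \geq s$.

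The second modification is the injectivity of $f_c$, which in the original proof relied on the uniqueness of the argmax $i(y)$. Instead, suppose $f_c(x_1) = f_c(x_2) = y$ with distinct $(a_1, \eta_1), (a_2, \eta_2)$, and set $w = \eta_2 v_{a_2} - \eta_1 v_{a_1}$; after a generic perturbation of the $v_i$'s, $w$ is nonzero. The optimality of $(a_k, \eta_k)$ at $x_k$ yields $F_\beta(y) \geq F_\beta(y + cw)$ and $F_\beta(y) \geq F_\beta(y - cw)$. But $F_\beta$ is a log-sum-exp, hence convex, and strictly convex along $w$ as soon as $\{\langle w, v_i\rangle\}_{i \in S}$ is not constant --- a condition I can guarantee generically for each of the finitely many relevant directions $w$. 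Strict convexity forces $w = 0$, contradicting $(a_1, \eta_1) \neq (a_2, \eta_2)$.

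The remainder is a verbatim adaptation of the original proof: set
\[
U = \left\{ x \in \R^N : F_\beta(x) \geq r, \ \sup_{a \in A} \langle x, v_a\rangle \leq \sqrt{2 \log |A|} + 3 \right\},
\]
use hypothesis~(\ref{asumpinterval-beta}) and Gaussian concentration of $\sup_{a \in A} \langle \Gamma, v_a\rangle$ to get $\gamma(U) \geq 5/8$, and apply the same Gaussian density-shift estimate to conclude $\gamma(f_c(U)) > 1/4$, which contradicts $\P(F_\beta(\X) \geq s) \leq 1/4$. The main obstacle I anticipate is the two-sided tuning of $c$: it must be large enough that $c\epsilon - \log(2|A|)/\beta \geq s-r$ (so $f_c(U)$ lies above level $s$) and small enough that the density-shift exponent $c\sqrt{2\log|A|} + 3c + c^2/2$ stays below an absolute constant strictly less than $\log(5/2)$ (so that the $\gamma(f_c(U)) > 1/4$ contradiction actually goes through). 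This simultaneous balance, against the hypothesis $\beta \geq \epsilon^2/(128(s-r)^3)$, is precisely what dictates the worsened constant $128$ (instead of $32$ in Lemma~\ref{deterministic many orthogonal}) in the exponent of $|A|$.
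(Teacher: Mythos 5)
Your proof is correct and shares the paper's high-level architecture (maximal near-orthogonal $A$, injective density-shifting map $f_c$, Gaussian shift estimate, contradiction with $\gamma(f_c(U))\leq 1/4$), but the two technical cruxes are handled by genuinely different means. For the increment inequality, the paper introduces an auxiliary quantity $\hat m_\beta(x) = \beta^{-1}\max_{a\in A,\chi\in\{\pm1\}} g_\beta^\chi(x,a)$, a max over partial partition functions (grouping the indices $j$ by $b(j)$ and $\mathrm{sgn}\langle v_{b(j)}, v_j\rangle$), establishes the sandwich $m_\beta - \beta^{-1}\log(2|A|)\le \hat m_\beta\le m_\beta$, and defines $f_c$ as the argmax of $\hat m_\beta(x+c\eta v_a)$; the single-winner structure of $\hat m_\beta$ then transports the zero-temperature argument almost verbatim, giving $\hat m_\beta(f_c(x))\ge \hat m_\beta(x)+c\epsilon$. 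You instead define $f_c$ directly via $F_\beta$ and derive $F_\beta(f_c(x))\ge F_\beta(x)+c\epsilon-\beta^{-1}\log(2|A|)$ by the pigeonhole computation $\sum_{(a,\eta)}\mathrm{e}^{\beta F_\beta(x+c\eta v_a)}\ge\mathrm{e}^{\beta c\epsilon}\mathrm{e}^{\beta F_\beta(x)}$, avoiding $\hat m_\beta$ altogether. For injectivity, the paper compares the partial partition functions $g_\beta^\chi(\cdot,\hat i(y))$ at the candidate preimages (appealing to genericity to break ties), whereas you exploit convexity of log-sum-exp: $F_\beta(y)\ge\max\bigl(F_\beta(y+cw),F_\beta(y-cw)\bigr)$ forces $F_\beta$ to be affine along $w$, hence $w=0$ for generic $\{v_i\}$. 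Your route is more elementary and, to my eye, more transparent; the paper's keeps a tighter structural parallel with the zero-temperature Lemma~\ref{deterministic many orthogonal}. The final choice $c=(s-r+\beta^{-1}\log(2|A|))/\epsilon$, the absorption of the logarithmic error via the lower bound on $\beta$, and the density-shift bookkeeping (including the reason the constant worsens from $32$ to $128$) come out essentially identically in both proofs, exactly as you anticipate.
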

\begin{proof}
  Define $A \subset S$ and $b: S \to A$ as in the proof of Lemma
  \ref{deterministic many orthogonal}. Our goal is to show that $A$
  satisfies (\ref{ALargeBeta}). Suppose for the sake of contradiction that
  it does not.

  As before, consider the probability space underlying $\X$ as a
  standard $N$-dimensional Gaussian $\Gamma$ with density
  $\gamma$. Let $\{v_i\}_{i \in S}$ be vectors with norm at most 1
  such that $X_i = \langle \Gamma, v_i \rangle + \mu_i$, so that
  $\mu_i = \E(X_i)$ and $\langle v_i, v_j \rangle = R(i, j)$. Define
  for $x \in \R^N$,

  \[ m_\beta(x) = \frac{1}{\beta} \log \sum_{i \in S} \mathrm{e}^{\beta (\langle x, v_i \rangle + \mu_i)} \]
  so that $m_\beta(\Gamma) \sim F_\beta(\X)$. Let us also define for
  $a \in A$ and $\chi \in \{-1, 1\}$ the quantities

  \[ g_\beta^\chi (x, a) = \sum_{\substack{j\in S, \, b(j)=a \\ \mathrm{sgn}(v_a, v_j) = \chi}} \mathrm{e}^{\beta (\langle x, v_j\rangle + \mu_j)} \,. \]
  \[ \hat{m}_\beta(x) = \frac{1}{\beta} \sup_{a \in A, \chi\in \{-1, 1\}} g_\beta^\chi(x, a) \,. \]
  Evidently, we have

  \[ m_\beta(x) \ge \hat{m}_\beta(x) \ge m_\beta(x) - \beta^{-1}\log (2|A|). \]

  For a positive constant $c>0$ to be specified later, define a
  piecewise linear mapping $f_c: \R^N \to \R^N$ as follows: for a
  point $x \in \R^N$, let $(a, \eta)$ be the element of $A \times \{
  -1, 1 \}$ which maximizes $\hat{m}_\beta(x + c \eta v_a)$, and
  define $f_c(x) = x + c \eta v_a$. We next verify that
  $f_c$ is injective outside of a set of probability zero.

  Write $y = f_c(x)$, and let

  \[ (\hat i(y), \chi(y)) = \argmax_{a \in A, \chi\in \{-1, 1\}} g_\beta^\chi(y, a). \]
  By definition of $(a, \eta)$, we get that
  \[ g_\beta^{\chi(y)} (x + c\eta v_a, \hat i(y))  = g_\beta^{\chi(y)} (y, \hat i(y)) \geq g_\beta^{\chi(y)} (x \pm c v_{\hat i(y)}, \hat i(y)) \,. \]
  On the other hand, by definition of $b(i(y))$, we see that
  $$\max(g_\beta^{\chi(y)} (x + c v_{\hat i(y)}, \hat  i(y)) , g_\beta^{\chi(y)} (x - c v_{\hat i(y)}, \hat i(y)) )\geq g_\beta^{\chi(y)} (x + c \eta v_a, \hat  i(y))\,.$$
Altogether, we deduce that
\begin{equation}\label{eq-reconstruct-a-eta}
\max(g_\beta^{\chi(y)} (x + c v_{\hat i(y)}, \hat i(y)) , g_\beta^{\chi(y)} (x - c v_{\hat i(y)}, \hat  i(y)) )=g_\beta^{\chi(y)} (x + c \eta v_a, \hat i(y))\,.
\end{equation}
As in the proof of Lemma~\ref{deterministic many orthogonal},   it is legitimate to assume that the values $|\langle v_{i_1}, v_{i_2} \rangle|$ where $i_1, i_2 \in S$ and $i_1 \leq i_2$ are all distinct. Under this assumption, we see that almost surely with respect to $x\sim \gamma$ we have that the values $\{g_\beta^{\pm 1} (x + c \chi v_i, \hat i(x+ c\chi v_i)): \chi \in \{-1, 1\}, i\in A\}$ are all distinct. Combined with \eqref{eq-reconstruct-a-eta} (note that the left hand side of \eqref{eq-reconstruct-a-eta} is a function of $y$), it follows that for almost surely every given $y$ we could reconstruct $(a, \eta)$ (and thus $x$), thereby completing the verification of the injectivity of $f_c$.

  We now take $c = \frac{\beta^{-1}\log (2|A|) + s - r}{\epsilon}$ and
  consider the set

  \[ U = \left\{ x \in \R^N :m_\beta(x) \ge r, \, \sup_{a \in A} \langle x, v_a \rangle \le \sqrt{2 \log |A|} + 3  \right\}. \]
  We have for all $x \in \RR^N$,
\begin{align*}m_\beta(f_c(x)) &\ge \hat{m}_\beta(f_c(x)) \ge \hat{m}_\beta(x) + c \epsilon \\
  &= \hat{m}_\beta(x) + \beta^{-1} \log (2|A|) + (s - r) \ge m_\beta(x) + (s - r).
  \end{align*}
  Thus, we have
  \begin{equation} \label{mfcu}
    x \in U \Rightarrow m(f_c(x)) \geq s.
  \end{equation}

  The rest of the proof proceeds in exactly the same manner as the
  proof of Lemma \ref{deterministic many orthogonal}. The only
  difference is that we have chosen a different value of $c$. However,
  by the hypothesis that (\ref{ALargeBeta}) is not satisfied, we have

  \[ \log |A| < \frac{\epsilon^2}{128(s - r)^2}. \]
  The lower bound condition on $\beta$ then implies

  \begin{equation} \label{CBound}
  c = \frac{\beta^{-1} \log |A| + s - r}{\epsilon} \le \frac{2(s - r)}{\epsilon}
  \end{equation}
  We thus have

  \[ c \sqrt{2 \log |A|} \le \frac{1}{4}. \]
  Also, combining (\ref{CBound}) with the condition that $s - r \le
  \frac{\epsilon}{16}$, we obtain

  \[ c \le \frac{1}{8}. \]
  These are the only properties of $c$ needed in the proof of Lemma
  \ref{deterministic many orthogonal}, so the same argument works.
\end{proof}

To prove Theorem \ref{thmpeaks-free-energy}, we use the same setup as
the proof of Theorem \ref{thmpeaks}.  Let $\alpha = 1 -
\frac{\delta}{4}$, and make the decomposition $\X = \alpha \X' +
\sqrt{1 - \alpha^2} \X''$. Let $U' = \{ i \in S : \X'_i \ge (1 -
\delta)m \}$.  Since we are not assuming any superconcentration of
$M(\X)$, we will implicitly use the bound $\sigma^2(M(\X)) \le 1$.

Recall that in the proof of Theorem \ref{thmpeaks}, we use Corollary
\ref{decomp-conc-cor} to show that the maximum comes from the indices
in $U'$ with high probability. The next lemma is a similar statement
for the free energy; although all indices contribute to the free
energy, we show that with high probability, most of the contribution
comes from indices in $U'$.

\begin{lemma} \label{free-energy-contrib}
  There exists a universal constant $C$ such that whenever $\beta > \frac{C \log
    N}{\delta m}$, we have

  \[ F_\beta(\X) \ge F_\beta(\X_{U'}) \ge F_\beta(\X) - \frac{1}{\beta}. \]
  with probability at least $1 - \frac{C}{\delta^2m^2}$.

\end{lemma}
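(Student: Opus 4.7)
The left inequality $F_\beta(\X) \geq F_\beta(\X_{U'})$ is immediate from $U' \subseteq S$ and the monotonicity of log-sum-exp in the index set. For the right inequality I would write
\[
F_\beta(\X) - F_\beta(\X_{U'}) = \frac{1}{\beta}\log\!\left(1 + \frac{\sum_{i \in S \setminus U'} e^{\beta X_i}}{\sum_{i \in U'} e^{\beta X_i}}\right),
\]
so it suffices to show that on an event of probability at least $1 - C/(m^2 \delta^2)$ the ratio on the right is at most $e - 1$; then $F_\beta(\X) - F_\beta(\X_{U'}) \leq 1/\beta$ as claimed.

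The plan is to intersect three high-probability events. First, Lemma~\ref{decomp-conc} applied with $t = \alpha = 1 - \delta/4$ and $\lambda = 3\delta/4$ (so $t - \lambda = 1 - \delta$ and the hypothesis $\lambda > 1 - t$ holds) gives $\sup_{i \in S \setminus U'} X_i \leq (1 - 3 c_1 \delta/4)\, m$ except on an event of probability $O(1/(m^2 \delta^2))$; this bounds the numerator by $N e^{\beta(1 - 3 c_1 \delta/4)m}$. Second, Corollary~\ref{decomp-conc-cor} guarantees that the maximizer $i(\X)$ of $\X$ lies in $U'$ except on an event of probability $O(1/(m^2 \delta^2))$, and on that event $\sum_{i \in U'} e^{\beta X_i} \geq e^{\beta M(\X)}$. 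Third, since $\var(M(\X)) \leq 1$ (inherited from $\var X_i \leq 1$ via Gaussian concentration), Chebyshev's inequality yields $M(\X) \geq (1 - c_1 \delta/2)\, m$ except on an event of probability $O(1/(m^2 \delta^2))$.

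On the intersection of these three events, the ratio is bounded by
\[
\frac{N\, e^{\beta(1 - 3 c_1 \delta/4)m}}{e^{\beta(1 - c_1 \delta/2)m}} = N\, e^{-\beta c_1 \delta m/4},
\]
which is at most $e - 1$ as soon as $\beta \geq C \log N /(\delta m)$ for a sufficiently large universal constant $C$. A union bound then gives the claimed probability $1 - C/(m^2 \delta^2)$. I do not anticipate any real obstacle here: the three inputs are already established, and the only content of the proof is the routine bookkeeping to extract the correct dependence on $\beta$, $\delta$, and $m$ from the threshold $\beta \geq C \log N/(\delta m)$.
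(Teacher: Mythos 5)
Your proposal is correct and follows essentially the same approach as the paper: bound the numerator $\sum_{i \in S\setminus U'} e^{\beta X_i}$ via Lemma~\ref{decomp-conc}, bound the denominator $\sum_{i \in U'} e^{\beta X_i}$ from below by $e^{\beta M(\X)}$ once the maximizer lies in $U'$, and use Chebyshev to lower-bound $M(\X)$, so that $\beta \gtrsim \log N/(\delta m)$ kills the error term. The only (harmless) difference is that you invoke Corollary~\ref{decomp-conc-cor} to place the maximizer in $U'$, whereas the paper gets that for free from the intersection of the other two events ($M(\X)$ large and $M(\X_{S\setminus U'})$ small already force the maximizer into $U'$), so your third event is redundant.
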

\begin{proof}
  Clearly, $F_\beta(\X) \ge F_\beta(\X_{U'})$ holds
  deterministically. Thus, we focus our attention on the second
  inequality. According to Lemma \ref{decomp-conc}, we have $M(\X_{S
    \setminus U'}) \le (1 - c_1 \delta)m$ with probability at least $1
  - \frac{C_2}{\delta^2m^2}$. Furthermore, Chebyshev's inequality
  tells us that $M(\X) \ge (1 - c_1 \delta / 2)m$ with probability at
  least $1 - \frac{1}{c_1^2\delta^2m^2}$.

  Let $\delta' = c_1 \delta / 2$. Then, excluding events of
  probability at most $\frac{C}{\delta^2m^2}$, where $C$ is a universal
  constant, we may assume that $M(\X) \ge (1 - \delta')m$ and $M(\X_{S
    \setminus U'}) \le (1 - 2\delta')m$.
    In that case,
  \begin{align*} F_\beta(\X) &= \frac{1}{\beta} \log \sum_{i \in S} e^{\beta X_i} = \frac{1}{\beta} \log \left( \sum_{i \in U'} e^{\beta X_i} + \sum_{i \in S \setminus U'} e^{\beta X_i} \right) \\
  & \le \frac{1}{\beta} \log \left( \sum_{i \in U'} e^{\beta X_i} + N e^{\beta (1 - 2 \delta') m} \right) \le \frac{1}{\beta} \log \left( \sum_{i \in U'} e^{\beta X_i} + N e^{-\beta \delta' m} e^{\beta (1 - \delta') m} \right) \\
  &\le \frac{1}{\beta} \log \left( \sum_{i \in U'} e^{\beta X_i} + N e^{-\beta \delta' m} \cdot e^{\beta M(\X)} \right) \le \frac{1}{\beta} \log \left( \sum_{i \in U'} e^{\beta X_i} + N e^{-\beta \delta' m} \sum_{i \in U'} e^{\beta X_i} \right) \\
  &= F_\beta(\X_{U'}) + \frac{1}{\beta} \log (1 + Ne^{-\beta \delta' m}).
    \end{align*}
  If $C$ is taken to be a sufficiently large universal constant, the assumption $\beta > \frac{C \log N}{\delta m}$
  implies the second term in the last expression is bounded by
  $\frac{1}{\beta}$. This proves the lemma.
\end{proof}

\begin{proof}[Proof of Theorem \ref{thmpeaks-free-energy}]
  Recall the strategy of proving Theorem \ref{thmpeaks}: we first show
  that concentration of $F_\beta(\X)$ implies with high probability a
  concentration of $F_\beta(\X''_{U'} + \mu)$ for some $\mu$, and then
  we apply Lemma \ref{deterministic many orthogonal free} to show the
  existence of many near-orthogonal indices in $U'$, which proves
  the theorem.

  Let $f_\beta = \E(F_\beta(\X))$. By Chebyshev's inequality,

  \begin{equation} \label{chebyshev-beta}
    \P \left(F_\beta(\X) \in [f_\beta - \tilde{\sigma}_\beta \zeta^{-1/2}, f_\beta - \tilde{\sigma}_\beta \zeta^{-1/2}] \right) \ge 1 - \frac{\sigma_\beta^2 \zeta}{\tilde{\sigma}_\beta^2} \ge 1 - \zeta.
  \end{equation}
  Define the process

  \[ \mathbf{Y} (\X') = \frac{1}{\sqrt{1 - \alpha^2}} \cdot \X_{U'} = \frac{\alpha \X'_{U'}}{\sqrt{1 - \alpha^2}} + \X''_{U'}, \]
  (where, as above, $\alpha = 1 - \delta / 4$) and let $\beta' = \sqrt{1 - \alpha^2} \beta$. Recall the hypotheses
  that $\beta \ge \frac{C_1 \log N}{\delta m}$ and $\beta \ge
  \frac{C_1}{\tilde{\sigma}_\beta}$, and take $C_1 \ge \max(1, C)$, where $C$
  is the constant of Lemma \ref{free-energy-contrib}. Then, Lemma
  \ref{free-energy-contrib} tells us that

  \[ \bigl |\sqrt{1 - \alpha^2} \cdot F_{\beta'}(\mathbf{Y}) - F_\beta(\X) \bigr | = |F_\beta(\X_{U'}) - F_\beta(\X)| \le \frac{1}{\beta} \le \tilde{\sigma}_\beta \zeta^{-1/2} \]
  with probability at least $1 - \frac{C}{\delta^2m^2}$. Thus,
  combining with (\ref{chebyshev-beta}), $F_{\beta'}(\mathbf{Y})$ lies
  in an interval $[r, s]$ of size

  \[ \frac{4 \tilde{\sigma}_\beta}{\sqrt{\zeta} \sqrt{1 - \alpha^2}} \]
  with probability at least $1 - \zeta - \frac{C}{\delta^2m^2}$. As in
  the proof of Theorem \ref{thmpeaks}, define

  \[ g(\X') = \P(F_{\beta'}(\mathbf{Y}(\X')) \in [r, s] ~ \mid \X'), \]
so according to the above we have
\begin{equation} \label{egx}
\EE[g(\X')] \geq 1 - \zeta - \frac{C}{\delta^2m^2}.
\end{equation}
Recall that $\beta' = \sqrt{1 - \alpha^2} \beta$.  By our assumption on $\beta$, we get that
\begin{align*}
   \beta'  \ge \frac{C_1 \sqrt{1 - \alpha^2} \cdot \delta \epsilon^2}{\tilde{\sigma}_\beta^3} \ge \frac{2C_1 (\sqrt{1 - \alpha^2})^3 \epsilon^2}{\tilde{\sigma}_\beta^3}  \ge \frac{128C_1 (\sqrt{1 - \alpha^2})^3 \zeta^{\frac{3}{2}} \epsilon^2}{64\tilde{\sigma}_\beta^3} = \frac{128C_1 \epsilon^2}{(s - r)^3}.
  \end{align*}
  Thus, on the event $g(\X') > \frac{3}{4}$ and taking $C_1$
  sufficiently large, the hypotheses of Lemma \ref{deterministic many
    orthogonal free} are fulfilled with $s - r = \frac{4 \tilde{\sigma}_\beta}{\sqrt{\zeta} \sqrt{1 - \alpha^2}}$ and inverse temperature
  $\beta'$. It follows that $U'$ contains at least

  \[ \exp\left( \frac{\epsilon^2}{128 (s - r)^2} \right) = \exp\left( \frac{\epsilon^2 \zeta (1 - \alpha^2)}{2048\tilde{\sigma}_\beta^2}\right) \ge \exp\left( \frac{C_3 \epsilon^2 \zeta \delta}{\tilde{\sigma}_\beta^2}\right) \]
  indices whose pairwise covariances do not exceed $\epsilon$ in
  magnitude, as required. Finally equation \eqref{egx}, combined with Markov's inequality,
  teaches us that $g(\X') >
    \frac{3}{4}$ occurs with probability at least $1 - 4\zeta - \frac{4C}{\delta^2m^2}$, which completes the proof.
\end{proof}

\section{A large deviation bound based on expectation} \label{SecLargeDeviations}
This section is devoted to the proof of Theorem \ref{thm-deviation}. The proof is based on stochastic calculus, and we need some preliminary notation. For a continuous martingale $M_t$ adapted to a filtration $\mathcal{F}_t$, we denote by $[M]_t$ the quadratic variation of $M_t$ between time $0$ and $t$. By $d M_t$ we denote the It\^{o} differential of $M_t$, which we understand as a predictable process $\sigma_t$ such that $M_t$ satisfies the stochastic differential equation $d M_t = \sigma_t d W_t$ where $W_t$ is a standard Wiener process.

Fix a Gaussian field $\X = \{X_i, 1 \leq i \leq N \}$ such that $\var[X_i] \leq 1$ for all $1 \leq i \leq N$. Now, take $(B_t)_{t \geq 0}$ to be a be a standard Brownian motion in $\RR^N$ with a corresponding filtration $\mathcal{F}_t$. Clearly, there exist vectors $\{v_i: 1\leq i\leq N\}$ of Euclidean norms at most $1$ such that we can represent the Gaussian field $\X$ by
$$X_i = \langle v_i, B_1\rangle \mbox{ for every } 1\leq i\leq N\,.$$
Define $f: \RR^N \mapsto \RR$ by $f(x) = \sup_{1\leq i\leq N} \langle v_i, x \rangle$ so that
$$
f(B_1) \sim \sup_{1 \leq i \leq N} X_i.
$$
Our goal is to show that $f(B_1)$ admits a large-deviation bound. A central component of the proof will be the Doob martingale
$$
S_t = \EE[ f(B_1) | \mathcal{F}_t ]\,,
$$
generated by the random variable $f(B_1)$ and filtration $\mathcal F_t$. Thanks to the Dambis/Dubins-Schwartz theorem, we can then view $(S_t)_{0\leq t\leq 1}$ as a time change of (one-dimensional) Brownian motion stopped at some random time $\tau=[S]_1$ (which corresponds to $t=1$). The main idea is that, due to the Gaussian concentration of the maximum for a Brownian motion stopped before time $T$, it will suffice to prove that with overwhelming probability $\tau$ is strictly less than $1$. To this end, we will try to calculate $d [S]_t$ by means of It\^{o} calculus, in what follows.

For $v\in\RR^N$ and $\sigma>0$, define
$$
\gamma_{v, \sigma} (x) = \frac{1}{\sigma^N (2 \pi)^{N/2}} \exp \left (- \frac{1}{2 \sigma^2} |x - v|^2 \right ) \mbox{ for } x\in \RR^N\,.
$$
An elementary property of the Brownian motion is that the distribution of $B_1$ conditioned on $\mathcal{F}_t$ has density $\gamma_{B_t, \sqrt{1-t}}(x)$. Therefore, we have that
$$
S_t = \int_{\RR^N} f(x) \gamma_{B_t, \sqrt{1-t}} (x) dx.
$$
For convenience of notation, we write
$$
F_t(x) = \gamma_{B_t, \sqrt{1-t}}(x).
$$
A direct calculation carried out in  \cite[Lemma 7]{Eldan13} gives that
$$
d F_t(x) = (1-t)^{-1} F_t(x) \langle x - B_t, d B_t \rangle\,.
$$
As a result of the above equation, we can calculate
$$
d S_t = d \int_{\RR^N} f(x) F_t(x) dx = (1-t)^{-1} \left \langle \int_{\RR^N} f(x) (x - B_t) F_t(x), d B_t \right  \rangle\,,
$$
and therefore we obtain
$$
d [S]_t = (1-t)^{-2} \left | \int_{\RR^N} (x - B_t) f(x) F_t(x) dx  \right |^2 dt\,,
$$
where we recall that $[S]_t$ denotes the quadratic variation for process $(S_t)$.
Substituting $y = \frac{x - B_t}{\sqrt{1-t}}$ in the last equation, we get that
\begin{equation}\label{eq-d-[S]}
d [S]_t = (1-t)^{-1} \left | \int_{\RR^N} y f(\sqrt{1-t} y + B_t) d \gamma(y)  \right |^2 dt.
\end{equation}
For convenience, we denote
$$
g_t(x) = \frac{ f(\sqrt{1-t} x + B_t) - f(B_t) }{\sqrt{1-t} }\,.
$$
Plugging this definition into \eqref{eq-d-[S]}, and using the fact that $\int_{\RR^N}x d \gamma(x) = 0$ gives
$$
d [S]_t = V_t dt,
$$
where $V_t$ is defined as
\begin{equation}\label{eq-g-t-V-t}
V_t = \left | \int_{\RR^N} x g_t(x) d \gamma(x) \right |^2\,.
\end{equation}
We wish to show that $V_t$ is strictly less than 1 for a strictly positive time interval. To this end, let
 $\epsilon, \delta > 0$ be two small numbers to be fixed, and define two events
\begin{equation}\label{eq-def-E-12}
E_1 = \left  \{V_t \leq 1 - \epsilon, ~~ \forall 0 \leq t \leq \delta \right  \}\,, \mbox{ and }
E_2 = \left \{  f(B_t) \leq \frac{\alpha}{2} \sqrt{ \log N}, ~~ \forall 0 \leq t \leq \delta \right \}\
\end{equation}
(recall that by our assumption we have $\E f(B_1) \geq \alpha \sqrt{\log N}$).
In order to bound $\P(E_1)$, we will need the next lemma whose point is that if $|V_t|$ is at some point close to $1$, then $\EE \left [f(B_1) - f(B_t) | \mathcal{F}_t \right ]$ cannot be too large.
\begin{lemma} \label{lem111}
Let $\{ \mu_i \}_{i=1}^N$ be such that $\mu_i \leq 0$ for all $1\leq i\leq N$. Define $\tilde f: \RR^N \mapsto \R$ by (recall that $|v_i| \leq 1$)
\begin{equation} \label{deftildef}
\tilde f (x) = \sup_{1 \leq i \leq N} (x \cdot v_i + \mu_i) \mbox{ for all } x\in \RR^N\,.
\end{equation}
Also define $
\epsilon = 1 - \sup_{\theta \in \Sph} \int_{\RR^N} \langle x, \theta \rangle \tilde f(x) d \gamma(x)$.
Then we have
$$
\int_{\RR^N} \tilde f(x) d \gamma(x) \leq 10 (1 + \sqrt{\epsilon \log N} )\,, \mbox{ for all } N\in \N.
$$
in particular, one has
\begin{equation} \label{eqepsg0}
\epsilon \geq 0.
\end{equation}
\end{lemma}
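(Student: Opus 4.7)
The plan is to identify the quantity $\epsilon$ concretely via Gaussian integration by parts, split the indices into those whose direction $v_i$ aligns well with a ``barycenter'' direction $\theta$ and those that don't, and assemble a self-bounding inequality for $M \df \int \tilde f\, d\gamma$ by combining a standard Gaussian maximum estimate on the ``good'' indices with a Markov-times-Poincar\'e bound on the ``bad'' ones.

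First I would observe that $\tilde f$ is convex, $1$-Lipschitz, and has $\nabla \tilde f(x) = v_{i^*(x)}$ a.e., where $i^*(x) \df \argmax_i(\langle x, v_i\rangle + \mu_i)$. Gaussian integration by parts (Stein's identity) then gives, for every $\theta \in \Sph$,
\[
\int_{\RR^N} \langle x, \theta\rangle \tilde f(x)\, d\gamma(x) \;=\; \EE \langle \theta, v_{i^*(X)}\rangle \;=\; \langle \theta, w\rangle, \qquad w \df \EE v_{i^*(X)}.
\]
Taking the supremum over $\theta \in \Sph$ identifies $\epsilon = 1 - |w|$, and since $|v_{i^*}| \leq 1$ pointwise we get $|w| \leq 1$ and hence $\epsilon \geq 0$, establishing \eqref{eqepsg0}. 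Setting $\theta = w/|w|$ (the case $w=0$ forces $\epsilon = 1$, in which case the claim follows from the standard $\sqrt{2\log N}$ estimate) and decomposing $v_i = a_i \theta + v_i^\perp$, the same identity reads $\EE a_{i^*(X)} = 1 - \epsilon$, so Markov gives $\PP(a_{i^*(X)} < 1 - r) \leq \epsilon/r$ for every $r > 0$. I split the indices into $I_g = \{i: a_i \geq 1 - r\}$ and $I_b = [N] \setminus I_g$ with a threshold $r$ to be chosen shortly, noting that $|v_i^\perp|^2 \leq 1 - a_i^2 \leq 2r$ whenever $i \in I_g$.

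Write $Z \df \langle X, \theta\rangle \sim N(0,1)$ and $W_i \df \langle X, v_i^\perp\rangle$. For the good part, the bounds $|a_i| \leq 1$ and $\mu_i \leq 0$ yield the pointwise estimate $\tilde f(X) \1_{i^* \in I_g} \leq |Z| + \max_{i \in I_g} W_i^+$, and since each $W_i$ with $i \in I_g$ is centered Gaussian with variance at most $2r$, the standard Gaussian maximum estimate gives $\EE[\tilde f \1_{i^* \in I_g}] \leq 1 + C\sqrt{r \log N}$ for an absolute constant $C$. For the bad part, the Gaussian Poincar\'e inequality gives $\var(\tilde f) \leq \EE |\nabla \tilde f|^2 = \EE|v_{i^*}|^2 \leq 1$, and Cauchy--Schwarz together with the Markov bound yields
\[
\bigl| \EE [\tilde f \1_{i^* \in I_b}] \bigr| \;\leq\; \sqrt{\PP(i^* \in I_b)}\,\sqrt{\EE \tilde f^{\,2}} \;\leq\; \sqrt{\epsilon/r}\,\sqrt{M^2 + 1}.
\]
Choosing $r = \min(4\epsilon, 1)$ forces $\sqrt{\epsilon/r} \leq 1/2$, and combining the two contributions produces the self-bounding inequality $M \leq 1 + C\sqrt{\epsilon \log N} + \tfrac{1}{2}(M+1)$, which rearranges to $M \leq 10 (1 + \sqrt{\epsilon \log N})$ after absorbing constants.

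The main obstacle is that naively the bad indices could contribute up to $\sqrt{\log N}$ to $\EE \tilde f$, since their $v_i^\perp$ can have norm close to $1$ and one cannot simply drop them from the supremum. The resolution is the self-bounding step above: the Gaussian Poincar\'e inequality replaces $\EE \tilde f^{\,2}$ by $M^2 + 1$, so the bad-part bound becomes a constant fraction of $M$ plus an $O(1)$ term, which absorbs into the $M$ on the left-hand side and leaves only the good-part contribution $\sqrt{\epsilon \log N}$ on the right.
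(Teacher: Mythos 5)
Your proof is correct in spirit, and the second half takes a genuinely different route from the paper's. Both proofs begin the same way: identify $\epsilon = 1 - |\E v_{i^*(\Gamma)}|$ via Gaussian integration by parts, deduce $\epsilon \geq 0$, and apply Markov to conclude that $i^*(\Gamma)$ lies with high probability in a set of indices whose $v_i$ are nearly aligned with $\theta$. From there the paper controls $\E \sup_{i \in I}\langle\Gamma, v_i\rangle$ for the aligned set $I$, converts this into a high-probability upper bound on $\tilde f(\Gamma)$ via Gaussian concentration, combines it with the Markov event by a union bound, and then invokes concentration a second time to pass back to the mean. You instead split $M = \E[\tilde f\,\1_{i^*\in I_g}] + \E[\tilde f\,\1_{i^*\in I_b}]$ and close the argument with a self-bounding inequality: the good part is bounded pointwise by $|Z| + \max_{i\in I_g}W_i^+$, and the bad part by Cauchy--Schwarz plus the Gaussian Poincar\'e inequality (which gives $\E\tilde f^2 \leq M^2+1$). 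This is a nice alternative that replaces the double use of the concentration inequality by a single variance bound and an absorption step; the paper's version makes the "median close to mean" step explicit, whereas yours bakes it into the self-bounding. One small slip: your claim that $r = \min(4\epsilon, 1)$ forces $\sqrt{\epsilon/r} \leq 1/2$ fails for $1/4 < \epsilon \leq 1$ (there $r = 1$ and $\sqrt{\epsilon/r} = \sqrt\epsilon > 1/2$), so the self-bounding does not close in that range. You flag only the endpoint $\epsilon = 1$ (the $w = 0$ case). The fix is immediate: whenever $\epsilon > 1/4$, the trivial bound $M \leq \sqrt{2\log N} \leq 5\sqrt{\log N} \leq 10\sqrt{\epsilon\log N}$ already gives the conclusion, so one may assume $\epsilon \leq 1/4$ throughout. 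You should also note that $M \geq 0$ may be assumed without loss of generality (else the conclusion is trivial), which you implicitly use in $\sqrt{M^2+1} \leq M+1$.
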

\begin{proof}
Pick $\theta\in \Sph$  such that
\begin{equation}\label{eq-def-theta}
1 - \int_{\RR^N} \langle x, \theta \rangle \tilde f(x) d \gamma(x) = \epsilon\,.
\end{equation}
 For each $x \in \RR^N$, consider the unique representation $x = y + z \theta$ where $z = \langle x, \theta \rangle$ and $y\in \theta^\perp$ (i.e., $\langle y, \theta \rangle = 0$). Denote by $\gamma^1$ and $\gamma^{N-1}$ standard Gaussian measures in dimension 1 and $N-1$ respectively, we can view $\gamma^1$ as a measure on $\mathrm{span}\{\theta\}$ and $\gamma^{N-1}$ a measure on $\theta^\perp$. It is clear that if $x\sim \gamma$, we have $(z, y) \sim \gamma^1 \otimes \gamma^{N-1}$. Therefore, we get that
$$
\int_{\RR^N} \langle x, \theta \rangle \tilde f(x) d \gamma(x) = \int_{\theta^\perp} \int_{\RR} z \tilde f(y + z \theta) d \gamma^1(z)  d \gamma^{N-1} (y)\,.
$$
Applying integration by parts to $ \int_{\RR} z \tilde f(y + z \theta) d \gamma^1(z) $, we obtain that
\begin{equation}\label{eq-nabla}
\int_{\RR^N} \langle x, \theta \rangle \tilde f(x) d \gamma(x) = \int_{\theta^\perp} \int_{\RR}  \left ( \frac{\partial}{\partial z} f (y + z\theta) \right )  d \gamma^1(z)  d \gamma^{n-1} (y) = \int_{\RR^N} \langle \nabla \tilde f(x), \theta \rangle d \gamma(x)\,.
\end{equation}
For $x \in \RR^N$, write
$$
i^*(x) = \arg \max_{1 \leq i \leq N}  (x \cdot v_i + \mu_i)
$$
(note that the maximizer is unique with probability 1 when we sample $x\sim \gamma$ and thus $i^*(x)$ is well-defined almost surely. Here we use the legitimate assumption that the vectors $\{v_i\}$ are distinct). By definition of $\tilde f$, we see that $\nabla \tilde f(x) = v_{i^*(x)}$. Combined with \eqref{eq-def-theta} and \eqref{eq-nabla}, it follows that
$$
\EE \langle v_{i^*(\Gamma)}, \theta \rangle  = 1 -\epsilon\,.
$$
where $\Gamma$ is a standard Gaussian random vector in $\RR^N$. Recall that $|v_i| \leq 1$ for all $1\leq i\leq N$. In view of the last equation, this fact gives $\epsilon \geq 0$. As a consequence of Markov's inequality, this fact also teaches us that
\begin{equation}\label{eq-i*-I}
\PP \left ( \langle v_{i^*(\Gamma)}, \theta \rangle \geq 1 - 10 \epsilon \right ) \geq 9/10.
\end{equation}
Let $I \subset [N]$ be the set of indices $i$ such that $\langle v_{i}, \theta \rangle \geq 1 - 10 \epsilon$. For every $i\in I$, write $v_i = u_i \theta + \tilde v_i$ where $u_i = \langle v_i, \theta \rangle$ and $\tilde v_i \in \theta^\perp$. By our assumption on $I$, we have $|\tilde v_i| \leq \sqrt{20\epsilon}$ for all $i\in I$. Therefore, we have
\begin{align*}
\EE \sup_{i\in I} \langle \Gamma , v_i\rangle  & \leq
\EE | \langle \Gamma, \theta \rangle|  + \EE \sup_{i\in I} \langle \tilde v_i, \Gamma \rangle \leq 1 + \sqrt{40 \epsilon \log |I|}\,,
\end{align*}
where the last inequality follows from \eqref{eq-simple-upper-exp}. Combined with \eqref{eq:borel}, it then follows that (note that $|I | \leq N$)
$$
\PP \left (\sup_{i\in I} \langle \Gamma, v_i \rangle \geq \sqrt{40 \epsilon \log N } + 10 \right ) \leq  1/5.
$$
Combined with \eqref{eq-i*-I}, using a union bound we get that
$$\PP \left ( \sup_{i\in [N]} \langle \Gamma, v_i \rangle \geq \sqrt{40 \epsilon \log N } + 10 \right )  \leq  1/2\,.$$
Together with another application of \eqref{eq:borel}, it completes the proof of the lemma.
\end{proof}

The next lemma applies the above in order to show that with high probability, either $V_t$ remains bounded from $1$ for a finite inteval of time, or $f(B_t)$ becomes rather large within a short time.

\begin{lemma}\label{lem-B-A}
Let $E_1, E_2$ be defined as in \eqref{eq-def-E-12}. For $\epsilon \leq \alpha^2 \cdot 10^{-4}$ and an absolute constant $C>0$, we have
$$
\PP( E_2 \setminus E_1 ) \leq C N^{-\alpha^2/32}\,.
$$
\end{lemma}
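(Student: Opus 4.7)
The plan is to introduce the stopping time
$$\tau = \inf\{t\ge 0 : V_t \ge 1-\epsilon\}\wedge 1,$$
so that on $E_2\setminus E_1$ one has $\tau\le\delta$ and $f(B_\tau)\le (\alpha/2)\sqrt{\log N}$. The strategy is to argue that, at this stopping time, Lemma~\ref{lem111} forces $S_\tau$ to be substantially smaller than $S_0=\E f(B_1)\ge\alpha\sqrt{\log N}$, and then to bound the probability of such a downward deviation of the martingale $S_t$ by a Gaussian tail estimate obtained through the Dambis/Dubins-Schwartz time change.

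The key observation for applying Lemma~\ref{lem111} is that
$$g_t(x) = \sup_{1\le i\le N}\bigl(\langle v_i,x\rangle + \mu_i(t)\bigr),\qquad \mu_i(t)=\frac{\langle v_i,B_t\rangle - f(B_t)}{\sqrt{1-t}}\le 0,$$
so $g_t$ has exactly the form of the function $\tilde f$ appearing in Lemma~\ref{lem111}. By the definition \eqref{eq-g-t-V-t} of $V_t$, the quantity denoted by $\epsilon$ in that lemma, when applied to $g_\tau$, equals $1-\sqrt{V_\tau}\le 1-\sqrt{1-\epsilon}\le \epsilon$. Hence the lemma gives $\int g_\tau\,d\gamma\le 10(1+\sqrt{\epsilon\log N})$. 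Combined with the identity $S_t - f(B_t) = \sqrt{1-t}\int g_t\,d\gamma$ (which follows from $S_t=\E[f(B_1)\mid\mathcal F_t]=\int f(\sqrt{1-t}y+B_t)\,d\gamma(y)$), the condition $f(B_\tau)\le(\alpha/2)\sqrt{\log N}$, and the choice $\epsilon\le\alpha^2/10^4$, this yields
$$S_\tau \le f(B_\tau)+10(1+\sqrt{\epsilon\log N}) \le \tfrac{3\alpha}{5}\sqrt{\log N}+10,$$
so that on $E_2\setminus E_1$ we have $S_0 - S_\tau \ge (\alpha/4)\sqrt{\log N}$, provided $N$ is large enough; for the finitely many smaller $N$ the claim is trivial upon enlarging $C$.

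To close the argument, I use the martingale structure of $S_t$. Gaussian integration by parts gives $\int x g_t\,d\gamma = \int \nabla g_t\,d\gamma$, and since $|v_i|\le 1$ implies that $g_t$ is $1$-Lipschitz, this yields $V_t\le 1$ uniformly in $t$ and consequently $[S]_\tau\le\tau\le 1$ almost surely. By Dambis/Dubins-Schwartz, $S_\tau - S_0 = W_{[S]_\tau}$ for a standard one-dimensional Brownian motion $W$, and the reflection principle gives
$$\P\!\left(S_\tau - S_0 \le -\tfrac{\alpha}{4}\sqrt{\log N}\right) \le \P\!\left(\min_{s\le 1} W_s \le -\tfrac{\alpha}{4}\sqrt{\log N}\right) = 2\P\!\left(W_1 \le -\tfrac{\alpha}{4}\sqrt{\log N}\right) \le 2N^{-\alpha^2/32}.$$
The main obstacle is the precise matching at the stopping time between $V_\tau\ge 1-\epsilon$ and the parameter called $\epsilon$ in Lemma~\ref{lem111}; once one recognizes that $\sqrt{V_t}$ is exactly the supremum appearing in that lemma, the remaining ingredients are routine stochastic calculus together with the reflection principle.
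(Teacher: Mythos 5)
Your proof is correct, and it follows the paper's strategy through the application of Lemma~\ref{lem111} at the stopping time (in the paper denoted $T$, which plays the role of your $\tau$), including the identification of the supremum over $\theta$ with $\sqrt{V_\tau}$ and the verification that $g_\tau$ has the form \eqref{deftildef} with nonpositive shifts. Where you diverge is the final probabilistic estimate. The paper exploits the fact that on the bad event, conditionally on $\mathcal F_T$ the quantity $f(B_1)$ is small with probability at least $1/2$ (by a conditional application of \eqref{eq:borel}), while unconditionally $\P(f(B_1)\leq \tfrac{3\alpha}{4}\sqrt{\log N}+20)\leq CN^{-\alpha^2/32}$ by the assumption $\E f(B_1)\geq\alpha\sqrt{\log N}$ and another application of \eqref{eq:borel}; combining the conditional and unconditional statements bounds the probability of the bad event. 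You instead observe that the same estimate shows $S_\tau = \E[f(B_1)\mid\mathcal F_\tau]$ drops below $S_0$ by at least $\tfrac{\alpha}{4}\sqrt{\log N}$, and then bound the probability of such a drop by the reflection principle, via the Dambis/Dubins--Schwartz time change and the uniform bound $V_t\leq 1$ (hence $[S]_\tau\leq 1$). This is essentially the same DDS machinery the paper invokes later in the proof of Theorem~\ref{thm-deviation}, applied one step earlier; it is arguably cleaner because it avoids the two-layered conditional/unconditional argument and produces the exponent $\alpha^2/32$ directly. Both approaches have the same mild bookkeeping issue near the end: the additive constant $10$ or $20$ means the claimed exponent holds only for $N$ above a threshold depending on $\alpha$, so the constant $C$ is not truly absolute but depends (mildly) on $\alpha$; since the paper has the same issue, your treatment is consistent with it.
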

\begin{proof}
Suppose that $E_2 \setminus E_1$ holds, and denote by
$$T = \min\{t \geq 0: V_t \geq 1 - \epsilon\}$$
to the first time in which $V_t \geq 1 - \epsilon$. By definition of $E_1^C$, we have $T \leq \delta$.
Using the decomposition that $B_1 = B_t + (B_1 - B_t)$ where $\frac{B_1 - B_t}{\sqrt{1-t}} $ has density function $\gamma$ and is independent of $B_t$, we get that
\begin{equation}\label{eq-g-t-martingale}
\int_{\RR^N} g_t(x) d \gamma(x) = \frac{\EE[f(B_1) - f(B_t) | \mathcal{F}_t]}{\sqrt{1-t}}.
\end{equation}
Consequently, we have
$$
\EE[f(B_1) | \mathcal{F}_T ] = f(B_T) + \EE[ f(B_1) - f(B_T) | \mathcal{F}_T ] = f(B_T) + \sqrt{1-t} \int_{\RR^N} g_T(x) d \gamma(x).
$$
Recalling \eqref{eq-g-t-V-t}, we see that
$$
\Big|\int_{\RR^N} x g_T(x) d \gamma(x)\Big |^2 \geq 1 - \epsilon\,.
$$
Therefore,  there exists $\theta \in \Sph$ such that
$$
\int_{\RR^N} g_T(x)\langle x, \theta \rangle d \gamma(x) \geq \sqrt{1 - \epsilon} \geq 1- \epsilon\,.
$$
We claim that Lemma \ref{lem111} can be applied with the function $g_T(x)$ (conditioning on the filtration $\mathcal F_T$) used in place of the function $\tilde f$. Indeed, since $f(cx) = cf(x)$ for all $x\in \RR^N$ and $c > 0$, we get that
\begin{align*}
g_T(x)
&=  f (x + B_T / \sqrt{1-T}) - f(B_T / \sqrt{1-T})\\
& =
\sup_{1 \leq i \leq N} \langle x + B_T / \sqrt{1-T}, v_i \rangle - \sup_{1 \leq i \leq N} \langle B_T/ \sqrt{1-T}, v_i \rangle \\
& =
\sup_{1 \leq i \leq N} \left ( x \cdot v_i + \frac{B_T}{\sqrt{1-T}} \cdot v_i - \sup_{1 \leq i \leq N} \frac{B_T}{\sqrt{1-T}} \cdot v_i \right )\,.
\end{align*}
This implies that it admits the form \eqref{deftildef}. Applying Lemma~\ref{lem111}, and using the assumption $\epsilon \leq \alpha^2 \cdot 10^{-4}$, we get
$$\int_{\RR^N} g_T(x) d \gamma(x)  \leq 10 (\sqrt{\epsilon \log N} + 1) \leq  \frac{\alpha}{10} \sqrt{\log N} + 10\,.$$
This implies that on the event $E = \{T \leq \delta\} \cap \{f(B_t) \leq \alpha \sqrt{\log N}/2, \forall 0\leq t\leq T\}$ (note that $E\supseteq E_2 \setminus E_1$), we have
$$
\EE[f(B_1) | \mathcal{F}_T ] \leq \frac{3\alpha}{4} \sqrt{\log N} + 10\,.
$$
Applying \eqref{eq:borel} (the non-centered version) to
$$f(B_1) = \sup_{1\leq i\leq N} \{\langle v_i, B_1 - B_T\rangle +  \langle v_i, B_T \rangle\}\,,$$
(where we treat $B_T$ as deterministic numbers as we conditioned on $\mathcal F_T$) we obtain that
\begin{equation}
\label{eq-f-B-1}
\left . \PP \left (f(B_1) \leq \frac{3\alpha}{4} \sqrt{\log N} + 20 ~ \right \vert E \right ) \geq 1/2\,.
\end{equation}
Recall the definition of $\alpha$, according to which
$$
\EE[ f(B_1) ] \geq \alpha \sqrt{\log N}.
$$
Another application of \eqref{eq:borel} gives that
$$\PP \left  (f(B_1) \leq \frac{3\alpha}{4} \sqrt{\log N} + 20 \right )\leq C N^{-\alpha^2/32}\,,$$
where $C>0$ is an absolute constant.  Combined with \eqref{eq-f-B-1}, we see that
\begin{equation*}
\P(E_2 \setminus E_1) \leq \P(E) \leq 2CN^{-\alpha^2/32}\,. \qedhere
\end{equation*}
\end{proof}

\begin{proof}[Proof of Theorem~\ref{thm-deviation}]
We first bound $\P(E_2)$ from below, and we will employ the idea from reflection principle of Brownian motion. Defining
$$T' = \min\{t: f(B_t) \geq \alpha \sqrt{\log N}/2\}\,,$$
we see that $E_2 = \{T' > \delta\}$. Let us denote by $i^*_{T'}$ the maximizer of $f(B_{T'})$. That is, $f(B_{T'}) = \langle v_{i^*_{T'}}, B_{T'} \rangle$. Then we have, on the event $T'\leq \delta$,
$$f(B_\delta) \geq f(B_{T'}) + \langle v_{i^*_{T'}}, B_\delta - B_{T'} \rangle\,.$$
Observe that whenever the event $T' \leq \delta$ holds, then $(B_\delta - B_{T'})$ has a origin-symmetric distribution conditioned on $\mathcal{F}_{T'}$. We infer that
$$\P(f(B_\delta) \geq \alpha \sqrt{\log N} /2) \geq \P(T'\leq \delta) /2\,.$$
Combined with an application of \eqref{eq:borel}, it follows that
$$\P(E_2^c) = \P(T' \leq \delta) \leq 4 N^{-\alpha^2}\,,$$
where we choose $\delta = 1/100$.
Choosing $\epsilon = 10^{-4} \alpha^2$, it follows from an application of Lemma~\ref{lem-B-A} that \begin{equation} \label{PE1big}
\P(E_1^c) \leq C' N^{-\alpha^2/32}
\end{equation}
for an absolute constant $C'>0$.

In order to complete the proof, note that $S_t - \E S_1$ is a mean-zero continuous-time martingale, so according to the Dambis / Dubins-Schwartz theorem, there exists standard a Brownian motion $\{W_t\}_{t \geq 0}$ such that
$$
W_{[S]_t} = S_t, ~~ \forall 0 \leq t \leq 1.
$$
An elementary fact about the one-dimensional Brownian motion is that
\begin{equation}
\PP \left ( \max_{0 \leq t \leq \tau}  |W_t| \geq s \right ) \leq 4 \mathrm{e}^{- \frac{s^2}{2 \tau}}, ~~ \forall \tau, S \geq 0
\end{equation}
As a consequence of equation \eqref{eqepsg0} we know that $V_t \leq 1$ for all $0\leq t\leq 1$. Therefore, on $E_1$ we have $[S]_1 \leq  1- \epsilon \delta \leq 1- 10^{-6} \alpha^2$, and combined with the last inequality,
$$
\P \Bigl ( \bigl \{|S_1 - E S_1| \geq \beta \sqrt{\log N} \bigr \} \cap E_1 \Bigr ) \leq \P \left (\max_{0\leq t\leq 1- 10^{-6} \alpha^2} |W_t| \geq \beta \sqrt{\log N}  \right ) \leq 4 N^{-\beta^2/2(1-10^{-6}\alpha^2)}.
$$
Combining with \eqref{PE1big} and using a union bound finally gives
\begin{align*}
\P(|S_1 - E S_1| \geq \beta \sqrt{\log N}) \leq
 4 N^{-\beta^2/2(1-10^{-6}\alpha^2)} + 2C N^{-\alpha^2/32}\,.
\end{align*}
This completes the proof of Theorem~\ref{thm-deviation}.
\end{proof}

\section{A lower bound for standard deviation in terms of expectation} \label{SecDevExp}

In this section, we provide a proof for Theorem~\ref{thm-var-exp}. As usual, for all $i\in [N]$ we associate a unit vector $v_i\in \RR^N$ such that we can represent $X_i = \langle v_i, \Gamma \rangle$ for all $i\in [N]$, where $\Gamma\in\RR^N$ is a standard Gaussian vector. We define a convex body,
$$
K = \bigcap_{i\in [N]} \{x ; \langle x, v_i \rangle \leq 1  \}\,.
$$
By slight abuse of notation, we will allow ourselves to denote by $\gamma(\cdot)$ the \emph{density of} the standard Gaussian measure in $\RR^N$. For any (smooth enough) set $A \subseteq \RR^N$ whose Hausdorff dimension is $n-1$, we define $\gamma^+(A)$ to be the Gaussian surface area of $A$, namely
$$
\gamma^+(A) = \int_{ A} \gamma(x) d \mathcal{H}_{N-1}(x)
$$
where $\mathcal{H}_{N-1}$ is the $(N-1)$-dimensional Hausdorff measure.
Further, we define
$$
L(K) = \sup_{t \geq 0} \gamma^+(K_t)
$$
where $K_t := \partial (t K)$.

The idea of the proof of the theorem will be to establish a connection between the quantities $L(K)$ and $\sqrt{\var[\sup_{i\in [N]} X_i ]}$ using the co-area formula and then to bound the quantity $L(K)$ using a geometric idea.  Recall that according to the co-area formula, for every function $\varphi(x) \in L_1(\RR^N)$ and for every Lipschitz function $u: \RR^N \to \RR$ one has
$$
\int_{\RR^n} |\nabla u(x)| \varphi(x) dx = \int_{-\infty}^{\infty} \int_{u^{-1}(t)} \varphi(x) d \mathcal{H}_{N-1}(x)  dt.
$$
Define $u(x) = \sup_{i\in [N]} \langle x, v_i\rangle $. The assumption that $\var[X_i] = 1$ for all $1 \leq i \leq n$ implies that $|v_i| = 1$, which in turn implies that $|\nabla u| = 1$ almost everywhere. Moreover, by definition we have
\begin{equation}\label{eq-GSA}
u^{-1}(t) = K_t.
\end{equation}
Therefore, for all $0 \leq a < b$, we can define
$$
\varphi(x) = \one_{ u(x) \in [a,b] } \gamma (x)
$$
and according to the co-area formula
\begin{align*}
\PP(u(\Gamma) \in [a,b]) &= \int_{ u(x) \in [a,b] } d \gamma(x) = \int_a^b \int_{K_t} \gamma(x) d \mathcal{H}_{N-1}(x) dt \\
&=  \int_a^b \gamma^+( K_t ) dt \leq (b-a) L(K).
\end{align*}
By taking $a = \EE [u(\Gamma)] - 2 \sqrt{\var [u(\Gamma)]}$ and
$b = \EE [u(\Gamma)] + 2 \sqrt{\var [u(\Gamma)]}$, and using Chebyshev's inequality, we finally have
$$
\frac{3}{4} \leq \PP(u(\Gamma) \in [a,b]) \leq 4 \sqrt{\var [u(\Gamma)]} L(K)
$$
or, in other words,
\begin{equation} \label{coarea}
\sqrt{\var \sup_{i\in [N]} X_i}  \geq \frac{1}{6} L(K)^{-1}.
\end{equation}
The next lemma provides an upper bound for $L(K)$ in terms of the expected supremum.
\begin{lemma}\label{lem-GSA}
With definitions above, there exists an absolute constant $C>0$ such that
$$
L(K) < C \E [ \sup_{i\in [N]} X_i ].
$$
\end{lemma}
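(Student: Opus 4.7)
My plan is to bound the density $f_u(t)$ of $u(\Gamma)$ uniformly in $t \geq 0$ by $Cm$, via a divergence-theorem identity combined with the Ehrhard concavity structure of the cumulative distribution function. First, apply the divergence theorem to the convex polytope $tK$ with the Gaussian-weighted vector field $F(x) = x\gamma(x)$: since $\nabla \cdot F = (N - |x|^2)\gamma$ and on each face $F_i^t$ of $\partial(tK)$ the outward unit normal is some $v_i$ with $\langle x, v_i\rangle = t$, the boundary integrand $\langle x,\nu\rangle\gamma$ equals $t\gamma$ identically. This yields
$$
tf_u(t) \;=\; \E\bigl[(N - |\Gamma|^2)\mathbf{1}_{u(\Gamma) \le t}\bigr] \;=\; \E\bigl[(|\Gamma|^2 - N)\mathbf{1}_{u > t}\bigr] \qquad (t>0),
$$
reducing the problem to a uniform bound on the right-hand side.

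By Ehrhard's inequality applied to the family $\{tK : t>0\}$ of dilates of $K$, the function $g := \Phi^{-1} \circ F_u$ is concave on $(0, \infty)$, and the Gaussian isoperimetric inequality gives $g'(t) \ge 1$. A short computation then shows that the mode $t^*$ of $f_u = \phi(g) g'$ satisfies $g(t^*) \le 0$, hence $L(K) \le g'(t^*)/\sqrt{2\pi}$. Using $g' \ge 1$ and $m = \int_0^\infty (1 - F_u(s))\,ds$, one deduces $t^* \le 2m$, so $t^*$ scales like $m$ (up to an additive constant) in the relevant regime where $m$ is bounded below by an absolute constant (which is the only case in which the target bound $L \le Cm$ carries nontrivial information).

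Next, specialize the identity to $t = t^*$ and orthogonally decompose $\Gamma = u(\Gamma) v_{i^*(\Gamma)} + \Gamma^\perp$, where $i^*$ denotes the argmax index. Since $|\Gamma|^2 = u(\Gamma)^2 + |\Gamma^\perp|^2$ and the perpendicular component has unconditional expected squared norm $N - 1$ for each fixed $i$, one obtains
$$
t^* L(K) \;=\; \E\bigl[(u(\Gamma)^2 - 1)\mathbf{1}_{u > t^*}\bigr] \;+\; \E\bigl[(|\Gamma^\perp|^2 - (N-1))\mathbf{1}_{u > t^*}\bigr].
$$
The first term is at most $\E u^2 \le m^2 + 1$ (using $\var u \le 1$); combined with $t^* \gtrsim m$, this yields the desired $L(K) \le Cm$ provided the second term is bounded by an absolute constant.

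Controlling this second term independently of $N$ is the main obstacle: a direct Cauchy--Schwarz on $(|\Gamma|^2 - N)$ brings in a dimensional factor of $\sqrt{N}$ that is not controllable by $m$ alone. The key geometric idea should be to exploit the face decomposition $\{u > t^*\} = \bigsqcup_i \{i^*(\Gamma) = i,\, u > t^*\}$ and on each face apply Gaussian integration by parts (Stein's identity) in the $(N-1)$-dimensional subspace $v_i^\perp$, using that the constraint defining each face is unbiased in these perpendicular directions. The resulting boundary integrals should be expressible in terms of $f_u(t^*)$ itself, permitting the second term to be absorbed into the left-hand side of the identity and the argument to close with an absolute constant.
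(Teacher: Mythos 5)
Your approach (divergence theorem for $x\gamma(x)$, Ehrhard concavity, and a decomposition of $|\Gamma|^2$ along the argmax direction) is genuinely different from the paper's, which instead uses Nazarov's idea of cylinders $\tilde F_i = \{x + s v_i : x \in F_i, s>0\}$: these are pairwise disjoint, the ratio $\gamma(t\tilde F_i)/\gamma^+(tF_i)$ equals a one-dimensional Mills-type ratio $\int_t^\infty e^{-s^2/2}ds / e^{-t^2/2}$, and summing over $i$ gives $\gamma^+(K_t) \lesssim t$ directly; the extension to all $t$ then uses Borell's log-concavity of $t\mapsto\gamma(tK)$ rather than Ehrhard.

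However, your proposal has two genuine gaps. First, the concavity argument gives $g(t^*)\le 0$, i.e.\ $t^*$ is at most the median of $u$, and hence $t^*\lesssim m$; but what you actually need to conclude $L(K)\lesssim m$ from $t^*L(K)\le m^2+1+(\mbox{second term})$ is the \emph{lower} bound $t^*\gtrsim m$, which you never establish. It can in fact fail: for example with $v_1=e_1$ and $v_2=-e_1$, the law of $u=|\Gamma_1|$ has its mode at $t^*=0$, so the identity $t^*L(K)=\E[(|\Gamma|^2-N)\mathbf 1_{u>t^*}]$ degenerates to $0=0$ and yields nothing about $L(K)$. Second, you explicitly acknowledge that the term $\E[(|\Gamma^\perp|^2-(N-1))\mathbf 1_{u>t^*}]$ is uncontrolled and that ``the key geometric idea should be'' a face-by-face Gaussian integration by parts that you do not carry out; conditioned on $\{i^*(\Gamma)=i,\ u>t^*\}$, the law of $\Gamma^\perp$ is \emph{not} unbiased in $v_i^\perp$ (the competing constraints $\langle\Gamma,v_j\rangle<\langle\Gamma,v_i\rangle$ constrain $\Gamma^\perp$), so the proposed cancellation is not automatic and is precisely where the dimensional dependence must be removed. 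Until both of these are resolved, the argument does not close.
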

\begin{proof}
For all $1 \leq i \leq N$, let $F_i$ be the $N-1$-dimensional facet of $K$ corresponding to the constraint $\langle x, v_i \rangle \leq 1$, in other words
$$
F_i = \{x ; ~ \langle x, v_i \rangle = 1 \mbox { and } \langle x, v_j \rangle < 1 \mbox { for all } j \neq i  \}.
$$
Following an idea of Nazarov, we also define the sets
$$
\tilde F_i = \{x + s v_i; ~ x \in F_i, ~ s > 0 \}.
$$
It is not hard to verify that the sets $\tilde F_i$ are disjoint (the reader is advised to draw a picture). Next, the assumption that for all $1 \leq i \leq N$ one has $\var[X_i] = 1$ implies that $|v_i|=1$. By considering the unique decomposition $x = s v_i + y$ where $s \in \RR$ and $y \in v_i^\perp$ and since the $\gamma = \gamma^1 \otimes \gamma^{N-1}$ where $\gamma_1$ and $\gamma^{N-1}$ are the one-dimensional and $(N-1)$-dimensional Gaussian measures respectively, we have for all $i$ and for all $t > 0$,
\begin{align*}
\gamma( t \tilde F_i) = \int_{t P_{v_i^\perp} F_i} \int_t^\infty \gamma^1(ds) \gamma^{N-1} (dy) = \gamma^{N-1} (t P_{v_i^\perp} F_i ) \gamma^1 \bigl ( [t,\infty ) \bigr )
\end{align*}
where $P_{v_i^\perp}$ denotes the orthogonal projection onto $v_i^\perp$. Moreover, by definition of the measure $\gamma^+$ and since $|v_i| = 1$, we have
$$
\gamma^+( t F_i ) = \frac{d \gamma^1}{dx} (t) \gamma^{N-1} ( t P_{v_i^\perp} F_i )
$$
where $\frac{d \gamma^1}{dx}$ denotes the density of the one-dimensional standard Gaussian measure. Combining the two above inequalities and using the observation that the sets $F_i$ and $\tilde F_i$ are disjoint we get that
$$
\frac{\gamma \left (t \bigcup_{1 \leq i \leq N} \tilde F_i \right )} { \gamma^+( K_t)} = \frac{ \int_t^\infty e^{-s^2/2} ds }{ e^{-t^2/2} } \geq \frac{c}{t}
$$
where $c>0$ is a universal constant. Using the fact that $\gamma \left (t \bigcup_{1 \leq i \leq N} \tilde F_i \right ) \leq 1$, it follows that
$$\gamma^+(K_t) \leq c^{-1} t \mbox{ for all } t\geq 0\,.$$

By definition of $L(K)$ and in view of the above formula, in order to prove the lemma it remains to show that $\gamma^+(K_t) < C \E\sup_{i\in [N]} X_i$ for all $t> 2 \E\sup_{i\in [N]} X_i$ and for some universal constant $C>0$. For convenience, define
$$m = \E \sup_{i \in [N]} X_i.$$
By Markov's inequality, we have $\gamma(2mK) \geq 1/2$. Using the result of \cite{Borell75b} on the log-concavity of Gaussian measure, we see that $\gamma(tK)$ is a log-concave function in $t$.  This implies that $\frac{\gamma^+(K_t)}{\gamma(t K)}$ is decreasing in $t$.  Thus, for all $t\geq 2m$ we have
$$\gamma^+(K_t) \leq \gamma(t K) \cdot \frac{\gamma^+ (K_{2m})}{\gamma(2m K)} \leq 2 \gamma^+(K_{2m}) \leq 4 c^{-1}m.$$
This completes the proof of the lemma.
\end{proof}

\begin{proof}[Proof of Theorem~\ref{thm-var-exp}]
Combine equation \eqref{coarea} with Lemma \ref{lem-GSA}.
\end{proof}

\section*{Acknowledgement}

We thank  Antonio Auffinger, Wei-Kuo Chen, Galyna Livshyts, Michel Ledoux, Elchanan Mossel  and Ofer Zeitouni for helpful discussions.  This work was initiated when J.D. and A.Z. were visiting theory group of Microsoft Research at Redmond. We thank MSR for the hospitality. 

\def\cprime{$'$}


\begin{thebibliography}{10}

\bibitem{Borell75}
C.~Borell.
\newblock The {B}runn-{M}inkowski inequality in {G}auss space.
\newblock {\em Invent. Math.}, 30(2):207--216, 1975.

\bibitem{Borell75b}
C.~Borell.
\newblock Convex set functions in {$d$}-space.
\newblock {\em Period. Math. Hungar.}, 6(2):111--136, 1975.

\bibitem{CZ11}
M.~Castellana and E.~Zarinelli.
\newblock Role of tracy-widom distribution in finite-size fluctuations of the
  critical temperature of the sherrington-kirkpatrick spin glass.
\newblock {\em Phys. Rev. B}, 84:144417, Oct 2011.

\bibitem{Chatterjee08}
S.~Chatterjee.
\newblock Chaos, concentration, and multiple valleys.
\newblock Preprint, available at \verb|http://arxiv.org/abs/0810.4221|.

\bibitem{Chatterjee09}
S.~Chatterjee.
\newblock Disorder chaos and multiple valleys in spin glasses.
\newblock Preprint, available at \verb|http://arxiv.org/pdf/0907.3381v4.pdf|.

\bibitem{CDD13}
S.~Chatterjee, A.~Dembo, and J.~Ding.
\newblock On level sets of gaussian fields.
\newblock Preprint, available at \verb|http://arxiv.org/abs/1310.5175|, 2013.

\bibitem{CFMP11}
A.~Cianchi, N.~Fusco, F.~Maggi, and A.~Pratelli.
\newblock On the isoperimetric deﬁcit in gauss space.
\newblock {\em American Journal of Mathematics}, 133(1):131--186, 2011.

\bibitem{Dudley67}
R.~M. Dudley.
\newblock The sizes of compact subsets of {H}ilbert space and continuity of
  {G}aussian processes.
\newblock {\em J. Functional Analysis}, 1:290--330, 1967.

\bibitem{Eldan13}
R.~Eldan.
\newblock A two-sided estimate for the gaussian noise stability deficit.
\newblock Preprint, available at \verb|http://arxiv.org/abs/math.PR/0510424|,
  2013.

\bibitem{F1}
X.~Fernique.
\newblock R\'egularit\'e de processus gaussiens.
\newblock {\em Invent. Math.}, 12:304--320, 1971.

\bibitem{KPZ}
M.~Kardar, G.~Parisi, and Y.-C. Zhang.
\newblock Dynamic scaling of growing interfaces.
\newblock {\em Phys. Rev. Lett.}, 56:889--892, Mar 1986.

\bibitem{Ledoux89}
M.~Ledoux.
\newblock {\em The Concentration of Measure Phenomenon}, volume~89 of {\em
  Mathematical Surveys and Monographs}.
\newblock American Mathematical Society, Providence, RI, 2001.

\bibitem{Madaule13}
T.~Madaule.
\newblock Maximum of a log-correlated gaussian field.
\newblock Preprint, available at \verb|http://arxiv.org/abs/1307.1365|, 2013.

\bibitem{MN12a}
E.~Mossel and J.~Neeman.
\newblock Robust dimension free isoperimetry in gaussian space.
\newblock Preprint, available at \verb|http://arxiv.org/abs/1202.4124|, 2012.

\bibitem{MN12b}
E.~Mossel and J.~Neeman.
\newblock Robust optimality of gaussian noise stability.
\newblock Preprint, available at \verb|http://arxiv.org/abs/1210.4126|, 2012.

\bibitem{Palassini}
M.~Palassini.
\newblock Ground-state energy fluctuations in the sherrington--kirkpatrick
  model.
\newblock {\em Journal of Statistical Mechanics: Theory and Experiment}, 2008.

\bibitem{Parisi}
G.~Parisi.
\newblock A sequence of approximated solutions to the s-k model for spin
  glasses.
\newblock {\em Journal of Physics A: Mathematical and General}, 13:115--121,
  1980.

\bibitem{SK}
D.~Sherrington and S.~Kirkpatrick.
\newblock Solvable model of a spin-glass.
\newblock {\em Phys. Rev. Lett.}, 35:1792--1796, Dec 1975.

\bibitem{ST74}
V.~N. Sudakov and B.~S. Tsirel{\cprime}son.
\newblock Extremal properties of half-spaces for spherically invariant
  measures.
\newblock {\em Zap. Nau\v cn. Sem. Leningrad. Otdel. Mat. Inst. Steklov.
  (LOMI)}, 41:14--24, 165, 1974.
\newblock Problems in the theory of probability distributions, II.

\bibitem{Talagrand87}
M.~Talagrand.
\newblock Regularity of {G}aussian processes.
\newblock {\em Acta Math.}, 159(1-2):99--149, 1987.

\bibitem{Talagrand-parisi}
M.~Talagrand.
\newblock The {P}arisi formula.
\newblock {\em Ann. of Math. (2)}, 163(1):221--263, 2006.

\end{thebibliography}
\end{document}